\definecolor{clemson-orange}{RGB}{234,106,32}
\definecolor{chicago-maroon}{RGB}{128,0,0}
\definecolor{cincinnati-red}{RGB}{190,0,0}
\definecolor{soft-cyan}{RGB}{68,85,90}
\newcolumntype{L}[1]{>{\raggedright\let\newline\\\arraybackslash\hspace{0pt}}m{#1}}
\newcolumntype{C}[1]{>{\centering\let\newline\\\arraybackslash\hspace{0pt}}m{#1}}
\newcolumntype{R}[1]{>{\raggedleft\let\newline\\\arraybackslash\hspace{0pt}}m{#1}}
\theoremstyle{definition}
\newtheorem{theorem}{Theorem}[section]
\newtheorem{lemma}[theorem]{Lemma}
\newtheorem{definition}[theorem]{Definition}
\newtheorem{remark}[theorem]{Remark}
\newtheorem{assumption}{Assumption}
\newtheorem{alg}{Algorithm}
\newcommand*\patchAmsMathEnvironmentForLineno[1]{%
  \expandafter\let\csname old#1\expandafter\endcsname\csname #1\endcsname
  \expandafter\let\csname oldend#1\expandafter\endcsname\csname end#1\endcsname
  \renewenvironment{#1}%
     {\linenomath\csname old#1\endcsname}%
     {\csname oldend#1\endcsname\endlinenomath}}%
\newcommand*\patchBothAmsMathEnvironmentsForLineno[1]{%
  \patchAmsMathEnvironmentForLineno{#1}%
  \patchAmsMathEnvironmentForLineno{#1*}}%
\newcommand{\E}{\mathbb{E}}
\def\itemrange#1{%
\addtocounter{enumi}{1}%
\edef\labelenumi{\theenumi--\noexpand\theenumi.}%
\addtocounter{enumi}{-1}%
\addtocounter{enumi}{#1}%
\item
\def\labelenumi{\theenumi.}}
\numberwithin{equation}{section}  
\title{Approximation Algorithm for Generalized Budgeted Assignment Problems and Applications in Transportation Systems}
\author{
    Hongyi Jiang\textsuperscript{a}, Samitha Samaranayake\textsuperscript{b}
}
\date{
    \textsuperscript{a}Department of Systems Engineering, City University of Hong Kong\\
    \textsuperscript{b}School of Civil and Environmental Engineering, Cornell University, USA\\
    \texttt{hongyi.jiang@cityu.edu.hk}, \texttt{samitha@cornell.edu} \\
    \today
}
\begin{document}
\maketitle

\begin{abstract}
Motivated by a transit line planning problem in transportation systems, we investigate the following capacitated assignment problem under a budget constraint. Our model involves $L$ bins and $P$ items. Each bin $l$ has a utilization cost $c_l$ and an $n_l$-dimensional capacity vector. Each item $p$ has an $n_l$-dimensional binary weight vector $r_{lp}$, where the $1$s in $r_{lp}$ (if any) appear in consecutive positions, and its assignment to bin $l$ yields a reward $v_{lp}$. The objective is to maximize total rewards through an assignment that satisfies three constraints: (i) the total weights of assigned items do not violate any bin's capacity; (ii) each item is assigned to at most one open bin; and (iii) the overall utilization costs remain within a total budget $B$.

We propose the first randomized rounding algorithm with a constant approximation ratio for this problem. We then apply our framework to the motivating transit line planning problem, presenting corresponding models and conducting numerical experiments using real-world data. Our results demonstrate significant improvements over previous approaches in addressing this critical transportation challenge.

 ~
 
 \noindent 
\textbf{Keywords:} Assignment problem, Approximation algorithm, Transportation systems
\end{abstract}

\section{Introduction}
This paper presents a framework for addressing a critical challenge in transportation systems: optimizing the selection of high-capacity fixed-route transit lines to maximize the total reward from passenger route coverage, subject to a specified operational budget constraint. This problem arises from urban transit systems' pressures of satisfying passenger demand with inadequate resource allocation, necessitating innovative solutions for effective line planning. Our approach aims to enhance the overall performance and efficiency of public transportation networks through strategic line planning. The primary challenge lies in developing an algorithm that is both theoretically sound and practically efficient, capable of deriving high-quality solutions for large-scale problems, such as those encountered in complex urban environments like New York City. This research makes contributions in both discrete optimization and transportation planning, proposing a novel algorithmic framework that bridges the gap between theoretical guarantees in algorithm design and practical applicability in real-world transit planning scenarios.

To address this challenge, we study the transportation problem in the context of a new assignment problem, which we call the Generalized Budgeted Assignment Problem (GBAP). Prior to introducing the formal definition, we present a warm-up example related to the aforementioned transportation line planning problems. This simple example aims to provide context and facilitate understanding of the subsequent formal definitions of the GBAP.

For ease of explanation, this example is relatively restrictive compared to real-world applications. Thus, it is important to note that we will study a more generalized line planning problem in Section \ref{sec:applications}. This extended discussion will still be based on the GBAP and corresponding approximation algorithm we develop, and will utilize real-world data.

\subsection{Warm-up Example}
This example involves selecting a set of fixed-route transit lines to operate such that passenger coverage is maximized, while adhering to vehicle capacity constraints and satisfying a budget constraint. By framing this as an assignment-type integer program, we aim to find the optimal solution that balances cost and service efficiency. More specifically, 
consider a transit system with four stops: $A$, $B$, $C$, and $D$. There are three candidate transit lines: line $1$, which travels from $A$ to $C$ to $D$; line $2$, which travels from $A$ to $B$ to $C$ to $D$; and line $3$, which travels from $C$ to $A$ to $B$ to $D$. These lines cost $20$, $40$, and $30$ respectively to operate. Each transit line operates a bus with a capacity of $2$ passengers. This capacity applies uniformly at all points along the route. We have six passengers: passengers $1$ and $2$ need to travel from $C$ to $B$, passengers $3$, $4$, and $5$ from $C$ to $D$, and passenger $6$ from $A$ to $B$. These transit lines, their costs, and passenger routes are illustrated in Figure \ref{fig:warmup} and Figure \ref{fig:warmup-2}.


\begin{figure}[h]
    \centering
    \includegraphics[width=0.48\linewidth]{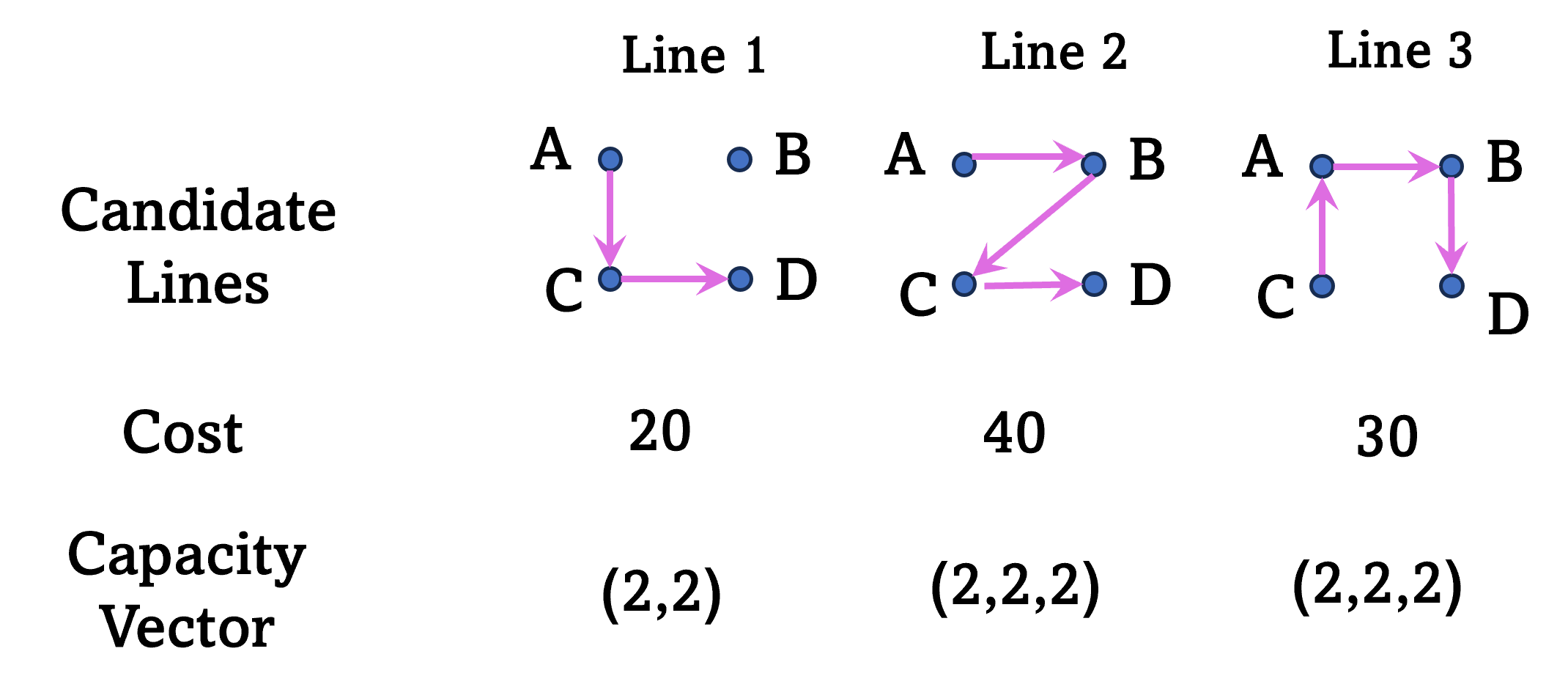}
    \caption{This figure displays the three candidate transit lines (Line $1$, Line $2$, and Line $3$) with their routes, operational costs, and capacity vectors. The routes are shown as directed graphs connecting stations $A$, $B$, $C$, and $D$. The cost of operating each line is given, along with its capacity vector. The capacity vector indicates the maximum number of passengers that can be accommodated on each segment of the line.}
    \label{fig:warmup}
\end{figure}

\begin{figure}[h]
    \centering
    \includegraphics[width=0.9\linewidth]{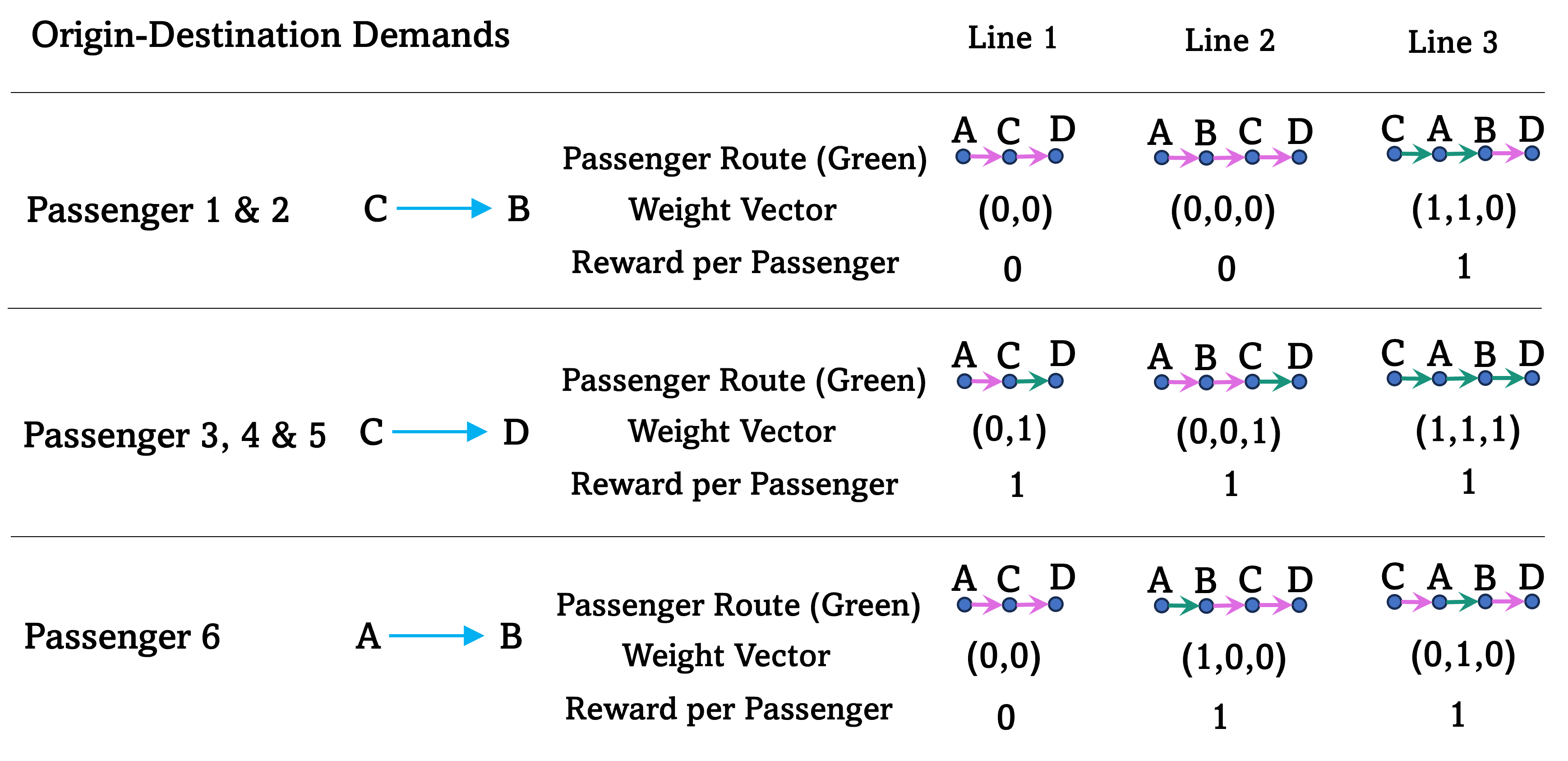}
    \caption{The figure shows the origin-destination pairs for passengers $1-6$, the routes they take on each transit line (with green arrows indicating used segments), the weight vectors for each line (where $1$ represents a used segment and $0$ an unused segment), and the reward for assigning each passenger to the line, where $1$ means the line fully covers the passenger’s origin-destination pair and $0$ means it does not.}
    \label{fig:warmup-2}
\end{figure}
For simplicity, we assume each passenger must travel directly from their origin to their destination on a single bus line (i.e., no transfers). To serve a passenger, the line chosen must be open and cover the passenger's entire route. The total cost of opened lines must stay within the given budget of $70$. Therefore, we can open at most $2$ lines.  Our goal is to choose a set of lines to open within the budget and assign passengers to these lines, maximizing the number of passengers served while adhering to these rules. If a line covers a passenger's route, assigning this passenger to the line generates a reward of $1$; otherwise, the reward is $0$. The objective is to maximize the overall generated reward in this context. 

 To solve this problem, we can construct a capacity vector for each of the three lines, as shown in Figure \ref{fig:warmup}. Line $1$ has a capacity vector of $(2,2)$, while $2$ and $3$ have capacity vectors of $(2,2,2)$. Each entry represents the maximum number of passengers that can be accommodated on a segment of the line. The length of each vector corresponds to the number of edges in that line.
Figure \ref{fig:warmup-2} illustrates how passengers utilize different lines and the corresponding weight vectors. For example, passengers $1$ and $2$, traveling from $C$ to $B$, have weight vectors $(0,0)$ for line $1$, $(0,0,0)$ for line 2, and $(1,1,0)$ for line $3$. These vectors indicate which segments of each line are used by the passengers, with $1$ representing a used segment and $0$ an unused segment.
The reward of assigning passengers to a line depends on whether the line covers their route. For instance, assigning passengers $1$ and $2$ to line $1$ or $2$ yields \emph{no reward} as these lines do not cover their route, while assigning them to line $3$ gives a reward of $1$ per person.
To obey capacity constraints, the sum of weight vectors for passengers assigned to a line cannot exceed that line's capacity vector entry-wise. For example, we cannot assign passengers $1$, $2$, and $3$ to line $3$ as it would exceed the capacity $(3 > 2)$ for the edges from $C$ to $A$ and $A$ to $B$.

\begin{remark}
    When a line covers a passenger's route, the weight vector contains 1s appearing in adjacent positions, reflecting that passenger routes consist of consecutive edges on the lines. For lines not covering a passenger's route, the weight vectors are defined as all zeros for consistency, though they could be arbitrary since the reward for these assignments is $0$.
\end{remark}

Based on these definitions, we can formulate this problem as an assignment-type integer program. This formulation is a specific case of GBAP, which we will present in details in Section \ref{sec:def-gbap}, specifically in (\ref{eq:2D-IP-BAP-UW}). It can be verified directly that in this example, the optimal solution is to operate lines $2$ and $3$, covering 5 passengers, with only one passenger (from C to D) left unserved.

\subsection{Definition of  Generalized Budgeted Assignment Problem}\label{sec:def-gbap}

Building on the warm-up example, we now formally define GBAP. This problem involves selecting a set of bins within a given budget and assigning a set of items to these selected bins under multidimensional capacity constraints, with the objective of maximizing the assignment reward.  To help understand this model, one can think of transit lines as bins and passengers as items to be allocated. Specifically, consider a system with $L$ bins and $P$ items. Each bin $l \in [L]$\footnote{Here, $[n]$ denotes the set $\{1,2,\ldots,n\}$ for any integer $n$.} is characterized by a utilization cost $c_l \in \mathbb{Q}_+$ and an $n_l$-dimensional capacity vector $f_l = \left(f_{l}^{(1)}, \ldots, f_{l}^{(n_l)}\right) \in \mathbb{Z}^{n_l}_+$. In our warm-up example, the capacity vector $f_l$ has uniform entries across all dimensions. However, in our generalized model, this capacity can vary across dimensions (see Remark \ref{rmk:def} for further details). Additionally, the dimension $n_l$ corresponds to the number of edges involved in a line in the warm-up example.
 Each item $p \in [P]$ has an $n_l$-dimensional weight vector for each bin $l$: $r_{lp} = \left(r_{lp}^{(1)}, \ldots, r_{lp}^{(n_l)}\right) \in \{0,1\}^{n_l}$ , where the $1$s in $r_{lp}$ (if any) appear in consecutive positions within the vector. This structure reflects the assumption that routes consist of consecutive edges, as motivated by the warm-up example where routes follow contiguous edges in a line.  Assigning item $p$ to bin $l$ yields a reward $v_{lp} \geq 0$. The objective is to maximize the total reward through an assignment that satisfies three constraints: (i) the total weight of items assigned to each bin does not exceed the bin's capacity in any dimension, (ii) each item is assigned to at most one bin, (iii) items can only be assigned to utilized bins, and (iv)  the total cost of utilized bins does not exceed a given budget $B \in \mathbb{Q}_+$. 

The GBAP problem can be formulated as a $0$-$1$ linear integer programming problem:
\begin{subequations}\label{eq:2D-IP-BAP-UW}
\begin{align}
    \max_{y,x}~~&\sum_{p\in[P]}\sum_{l\in[L]}v_{lp}x_{lp}\\
    \mbox{s.t.}~~&\sum_{l\in [L]}c_ly_l\leq B\label{eq:original-IP-budget1}\\
    &\sum_{p\in [P]}x_{lp}\cdot r^{(i)}_{lp}\leq f_{l}^{(i)}\cdot y_l~~~\forall l\in[L], i\in [n_l]\label{eq:original-IP-capacity}\\
    &\sum_{l\in[L]}x_{lp}\leq 1~~~\forall p\in [P]\label{eq:original-IP-rho}\\
    &x_{lp}\in\{0,1\}~~~\forall p\in[P],~l\in[L]\label{eq:original-IP-x01}\\
    &y_l\in\{0,1\}~~~\forall l\in[L].
\end{align}
\end{subequations}
(\ref{eq:original-IP-budget1}) ensures that the budget constraint is respected. (\ref{eq:original-IP-capacity}) guarantees that the multidimensional capacity of bins is not breached by the assigned items.  (\ref{eq:original-IP-rho}) requires that item $p$ can be assigned at most one bin. If item $p$ is assigned to bin $l$, then $x_{lp}=1$. Otherwise, $x_{lp}=0$.  (\ref{eq:original-IP-x01}) guarantees that each bin can have at most one copy of each item. Similarly, $y_l=1$ indicates that bin $l$ is utilized. Items can only be matched to utilized bins.

\begin{remark}\label{rmk:def}
While the warm-up example uses binary rewards ($0$ or $1$), our model allows for any non-negative (rational) rewards depending on different passengers (items) and transit lines (bins). This flexibility enables the study of more complex transportation scenarios, such as maximizing social welfare defined according to passenger characteristics like income and age. For further details and discussions on applications, please refer to Section \ref{sec:applications}.

Additionally, while the capacity vector in our warm-up example contains uniform values, our model allows for different values across various dimensions. This feature provides enhanced flexibility, making the model applicable to a broader range of scenarios.
\end{remark}

For the rest of this paper, we define $k$ as the ratio of the budget to the largest line cost:
\begin{align}\label{eq:k-def}
    k := \frac{B}{\max_{l\in[L]}c_{l}}
\end{align}

\noindent This notation will be used frequently throughout our subsequent discussion and analysis.

\subsection{Linear Programming Relaxation of Integer Program \eqref{eq:2D-IP-BAP-UW}}

We will develop a randomized rounding algorithm based on the solution to the linear programming (LP) relaxation of \eqref{eq:2D-IP-BAP-UW}. While a natural approach to relaxing \eqref{eq:2D-IP-BAP-UW} would be to directly set $x_{lp}$ and $y_l$ as continuous variables, this method yields a loose solution, as noted in \cite{fleischer2011tight} and \cite{perivier2021real}, thereby limiting the performance of LP-based randomized rounding algorithms. Consequently, we will employ the LP relaxation \eqref{eq:2d-IP-config-original} initially proposed by \cite{fleischer2011tight}, augmented with an additional budget constraint.

For each bin $l\in [L]$, define 
\begin{align}\label{eq:Il}
    I_l:=\left\{S\subseteq [P]: \sum_{p\in S} r^{(i)}_{lp}\leq f_{l}^{(i)}~~~\forall i\in [n_l] \right\}
\end{align}
\noindent In other words, $I_l$ is the family of all nonempty feasible assignments of items to $l$, where a feasible assignment is a set $S\subset [P]$ such that bin $l$ has enough capacity to contain all the items in $S$, {and $\emptyset\notin I_l$}. For instance, in the warm-up example, if $l$ refers to line $3$, then $S:=\{\text{passenger } 1, 2, 3\} \notin \mathcal{I}_l$ because it violates the capacity constraints, whereas $S:=\{\text{passenger } 1, 2\} \in \mathcal{I}_l$.

Then the LP relaxation of (\ref{eq:2D-IP-BAP-UW}) can be stated as follows:
\begin{subequations}\label{eq:2d-IP-config-original}
\begin{align}
    \underset{\{X_{lS}\}}{\max}~~~~~~~ &\sum_{p\in [P]}\sum_{l\in[L]}\sum_{S\in I_l:p\in S}v_{lp}X_{lS}\\
    s.t. ~~~~~~~~&\sum_{S\in I_l}X_{lS}\leq 1~~~~\forall l\in [L]\label{eq:one-assignment}\\\
    &\sum_{l\in[L]}\sum_{S\in I_l:p\in S}X_{lS}\leq 1~~~~\forall p\in [P]\label{eq:config-rho}\\
    &\sum_{l\in[L]}\sum_{S\in I_l}c_{l}X_{lS}\leq B\label{eq:config-budget1}\\
    &X_{lS}\in [0,1]\label{eq:config-original-integer}
\end{align}
\end{subequations}
When $X_{lS}$ is restricted to $\{0,1\}$, (\ref{eq:one-assignment}) enforces that each bin can be matched with at most one feasible assignment.  Constraints (\ref{eq:config-rho}) and (\ref{eq:config-budget1}) correspond to (\ref{eq:original-IP-rho}) and (\ref{eq:original-IP-budget1}) respectively. This LP relaxation can be solved in polynomial time {using the} ellipsoid method as in \cite{fleischer2011tight,perivier2021real}. 

\medskip

\noindent {\bf Paper Structure:} The remainder of this paper is organized as follows:
Section \ref{sec:review-and-contribution} presents related work, challenges, and our contributions.
Section \ref{sec:overview} provides a high-level overview of our approach.
Sections \ref{sec:prelim} and \ref{sec:alg-1d} detail the technical foundations, algorithm, and its analysis.
Section \ref{sec:applications} presents numerical experiments using real-world data.
Section \ref{sec:conclusion} concludes the paper and discusses future research directions.

\section{Related Work, Challenges and Contributions}\label{sec:review-and-contribution}

\subsection{Related Work}
 
The GBAP is a variant of separable assignment problem (SAP). In SAP, there are $L$ bins and $P$ items. For each item $p$ and bin $l$, there
is a reward $v_{lp}$ generated when item $p$ is assigned to
bin $l$. Moreover, for each bin $l$, there is a separate
packing constraint, which means that only certain subsets of
the items can be assigned to bin $l$ when the subset satisfies the constraint. The goal is to find an assignment of items to the bins such that
all the sets of items assigned to bins do not violate packing constraints, each item is
assigned to at most one bin, and the total reward of assignments is optimal. Fleisher et al. \cite{fleischer2011tight} propose a
$\beta\left(1-\frac{1}{e}\right)$-approximation for SAP assuming
that there is a $\beta$-approximation
algorithm for the single bin subproblem. Moreover, they
showed that a special case of SAP called the capacitated distributed caching problem (CapDC) cannot be approximated
in polynomial time with an approximation factor better
than $\left(1-\frac{1}{e}\right)$ unless NP$\subseteq$ DTIME$(n^{O(\log \log n)})$. In CapDC, there are $L$ cache locations (bins), $P$ requests (items) and $T$ different files. Bin $l$ has capacity $A_l$, and file $t$ has size $a_t$. Request $p$ is associated with some file $t_p$ and a value $R_p$. If a request $p$ is connected to cache location $l$, then there is a cost $c_{lp}$ and the reward is $R_p-c_{lp}$. The total size of the files corresponding to connected requests should be no larger than the capacity of the bin. 

The maximum generalized assignment problem (GAP) is an important special case of SAP, in which each bin $l$ has size $A_l$, each item $p$ has size $a_{lp}$ in each bin $l$ and a feasible assignment  means that the total size of assigned items for each bin is not larger than the bin's capacity. GAP is proved to be APX-
hard by Chekuri et al. \cite{chekuri2005polynomial}. The best known approximation algorithm for
GAP is from \cite{feige2006approximation} and achieves an
approximation factor of $\left(1-\frac{1}{e}+\delta\right)$ for a small constant
$\delta>0$. 

Calinescu et al. \cite{calinescu2011maximizing} also give a $\left(\beta(1- \frac{1}{e})\right)$-approximation for SAP. Moreover, they give a $\left(1-\frac{1}{e}-\epsilon\right)$-approximation algorithm for GAP with the additional restriction {of a budget constraint that} on the number of bins that can be used. \cite{kulik2013approximations} propose a $\left(1-\frac{1}{e}-\epsilon\right)$-approximation method for GAP with knapsack constraints, where there is a multi-dimensional budget vector and each item has a multi-dimensional cost vector same for each bin. The total cost vector of assigned items in a feasible assignment is no larger than the budget vector entry-wise.

A generalization of SAP, which is called as $\rho$-SAP, is studied by Bender et al. \cite{bender2015packing} (it is called $k$-SAP by \cite{bender2015packing}, where $k$ is the number of allowed bins there, but here we make it consistent with our notation to avoid confusion). In $\rho$-SAP each item can be assigned to at most $\rho$ different
bins, but at most once to each bin. The goal is to find an assignment of items to bins that maximizes total reward. They present a $\beta\left(1- \frac{1}{e^{\rho}}\right)$-approximation algorithm for $\rho$-SAP under the assumption that there is a $\beta$-approximation algorithm for the single bin subproblem. If the single bin subproblem admits an FPTAS, then the approximation factor is $\left(1-\frac{1}{e^{\rho}}\right)$. The scheme can also be extended to the different upper bounds of number of allowed bins for different items: if the upper bound is $\rho_i$ for item $i$, and $\rho=\min_i \rho_i$, then their algorithm yields $\beta\left(1- \frac{1}{e^{\rho}}\right)$ approximation. 

None of the aforementioned studies are applicable to scenarios that include budget constraints on bins.

In P\'erivier et al.'s study \cite{perivier2021real}, the focus is on the Real-Time Line Planning Problem (RLPP), which can be modeled as GBAP. In this problem setting, each passenger is restricted to selecting a single subroute per line, and there is no cost associated with allocating passengers to these lines. To tackle the RLPP, the authors propose a randomized rounding algorithm, building upon the scheme initially introduced by Fleischer et al. \cite{fleischer2011tight}. This algorithm yields a \(1-\frac{1}{e}-\epsilon\)-approximate solution, with the probability of exceeding the budget constraint capped at \(e^{-\frac{k}{3}\epsilon^2}\). Here, \( k = \frac{B}{\max_{l \in [L]} c_l} \), where \( B \) represents the budget, \( c_l \) is the cost associated with line \( l \), and \( L \) is the set of all lines. For a comprehensive discussion and additional related works in the domain of line planning, please see Section \ref{sec:applications}.




GAP in an online setting has also been studied by Alaei et al. \cite{alaei2013online}. They propose a $\left(1-\frac{1}{\sqrt{k}}\right)$-competitive algorithm under the assumption that no item takes up more than $\frac{1}{k}$ fraction of the capacity of any bin. In the setting of \cite{alaei2013online}, items arrive in an online manner; upon arrival, an item can be assigned to a bin or discarded; the
objective is to maximize the total reward of the assignment.  The size of an
item is revealed only after it has been assigned to a bin; the distribution of each item's reward and size is
known in advance.  The online algorithm is developed based on a generalization of the magician’s problem in \cite{alaei2014bayesian}, where it is used to {assign a set of items with limited supply to a set of buyers in order to maximize
the expected value of revenue or welfare}. 


\subsection{Challenges}\label{sec:contribution}

 Our objective is to devise an algorithm that maintains a constant approximation ratio while being practically efficient. Balancing theoretical rigor with practical efficiency poses a significant challenge. Moreover, conventional approaches fail to yield a constant approximation ratio for our specific problem while meeting all practical constraints.

Firstly, while our problem shares similarities with the capacitated facility location problem (CFLP), for which \cite{wolsey1982maximising} provides a $1-\frac{1}{e}$ approximation algorithm based on the CFLP's submodular structure, our problem lacks this submodular property \cite[Proposition 4.10]{perivier2021real}. Consequently, methods aimed at maximizing submodular functions, as seen in works like \cite{lee2009non,kulik2011submodular,chekuri2012improved}, are not applicable to our context. This necessitates the development of innovative approaches tailored to the unique challenges discussed in this paper.

Secondly, the classical LP-based randomized rounding algorithms, proposed by \cite{fleischer2011tight,perivier2021real}, while simple, can have a non-trivial probability to produce solutions that violate the budget constraint. More specifically, the approach in \cite{perivier2021real}, an adaptation of the method from \cite{fleischer2011tight}, attempts to mitigate this by scaling down the LP solutions to manage the probability of budget constraint violations. While this method achieves a \(1-\frac{1}{e}-\epsilon\)-approximate solution, there remains a non-negligible probability, bounded by \(e^{-\frac{k}{3}\epsilon^2}\), that the budget constraints may still be breached. This probability fails to guarantee a constant approximation ratio of feasible solutions and can exceed \(\frac{1}{2}\) even when \(k\) is as large as \(\frac{1}{\epsilon^2}\).

To control this probability of violation, one might consider setting \(\epsilon\) to a relatively high value. However, this adjustment would negatively impact the solution quality, a dilemma extensively discussed in Remark \ref{remark:tradeoff}, which provides a detailed analysis and additional insights into this trade-off.

In summary, classical randomized rounding struggles to manage budget constraints and ensure that each item is assigned to at most one bin while still achieving a constant approximation ratio, especially when the algorithm must be computationally efficient to handle large-scale problems.

\subsection{Contributions}

We introduce a randomized approximation algorithm that is not only practically efficient but also maintains a constant approximation ratio. Our numerical experiments, conducted using real-world data, also outperform the previous work.

Our preliminary major findings were initially presented as an extended abstract at COCOON~\cite{jiang2022approximation}. It featured randomized approximation algorithms but lacked formal proofs and did not offer a constant approximation ratio. In this extended work, we have refined our algorithm to achieve a constant approximation ratio.

In Section \ref{sec:alg-1d}, we introduce our randomized approximation algorithm for solving GBAP, which achieves a constant approximation ratio. Specifically, our algorithm achieves an approximation ratio of $\frac{1}{8}$ for $k < 3$, and $\frac{k-1}{2k}\cdot \left(1-\frac{1}{\sqrt{k}}\right) > \frac{1}{8}$ for $k \geq 3$, where $k$ is defined in \eqref{eq:k-def}.  To the best of our knowledge, this is the first algorithm to achieve a constant approximation ratio for GBAP. 

Our analysis leverages two online optimization mechanisms detailed in Section \ref{sec:prelim}. The study of online knapsack problems in Section \ref{sec:online-knapsack} may be of independent interest. Furthermore, to overcome the limitations of classical randomized rounding, which struggles to simultaneously satisfy multiple constraints, we introduce a novel approach. Our method utilizes the realizations from randomized rounding to simulate two distinct online mechanisms. This innovative technique enables us to produce solutions that concurrently satisfy different constraints while maintaining theoretical guarantees. Furthermore, this method offers new insights into algorithm design for related complex optimization problems. 



In Section \ref{sec:applications}, we apply our GBAP approximation algorithms to the RLPP . Using real-world data from New York City, we conduct numerical experiments that demonstrate improvements over previous approaches. These results highlight the practical significance of our algorithms in addressing complex transportation challenges.

\section{Overview of Algorithm Design and Analysis}\label{sec:overview} 

To provide a clear road-map of the technical discussion to follow, we first present an overview of our algorithm design and analysis, before delving into the specifics. 

In our algorithm design, we will consider two scenarios with respect to the ratio of the budget to the largest line cost ($k$ as defined in \eqref{eq:k-def}): \(k<3\) and \(k\geq 3\). For each scenario, we develop an alternative randomized procedure (see Lemma \ref{lem:kgeq3} and \ref{lem:kleq3}) that, while complex to implement, is theoretically analyzable. We derive an approximation ratio for each procedure and subsequently show that a simpler algorithm we design (see Algorithm \ref{alg-no-magician-1d}) does at least as well (up to a $\frac{1}{2}$ factor), thereby proving an approximation ratio for our simpler algorithm. Our proposed algorithm can be viewed as a simplified method that is derived from these two procedures. A general high level illustration of the techniques is given in the flow chart shown in in Figure \ref{fig:flow}.

\begin{figure}
    \centering
    \includegraphics[width=\linewidth]{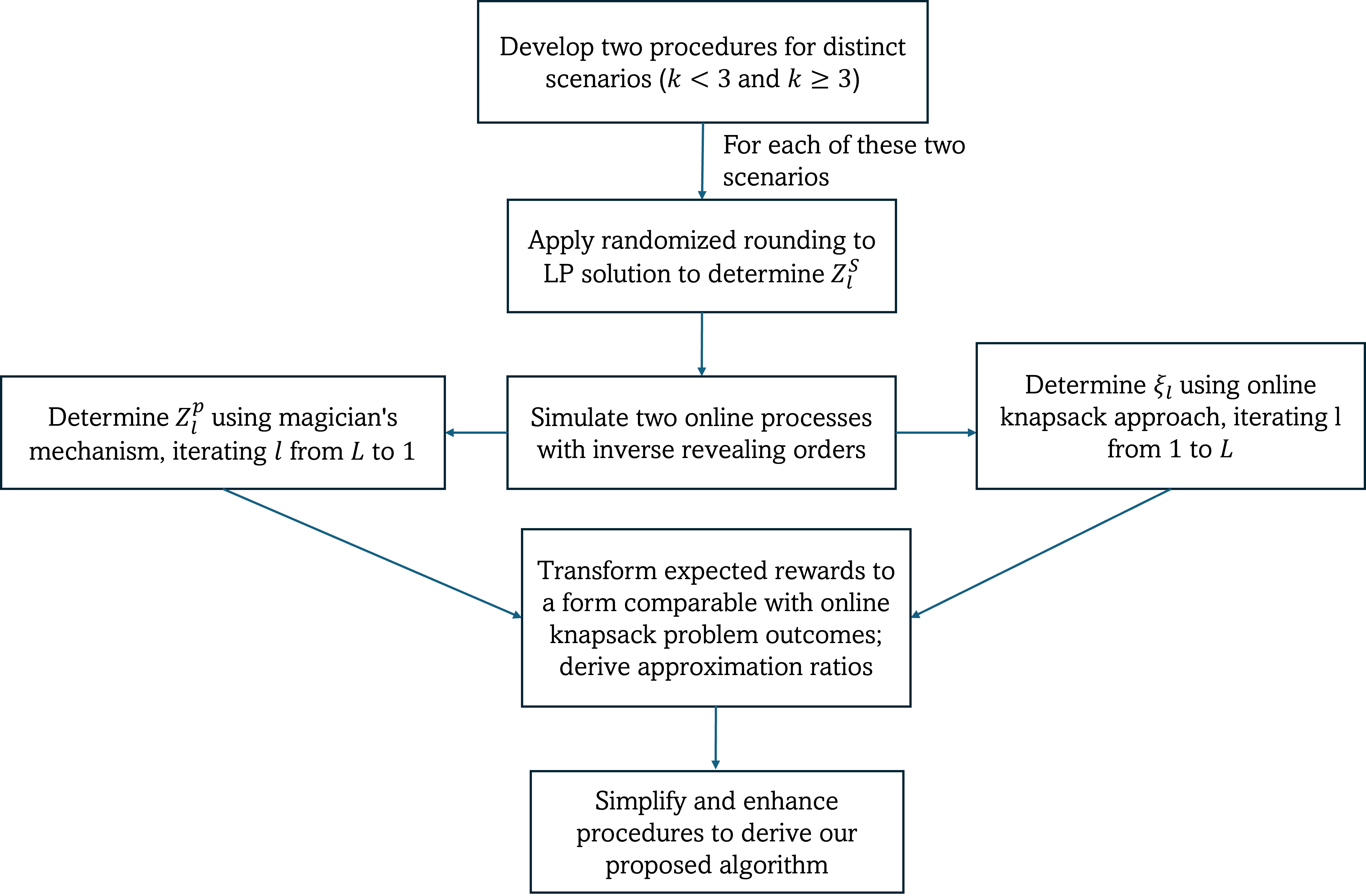}
    \caption{Overview of Algorithm Design and Analysis}
    \label{fig:flow}
\end{figure}

The design of these procedures follows a similar approach. Both procedures begin with randomized rounding based on the LP relaxation of the problem to select at most one \(S\in I_l\) for each \(l\in [L]\). This selection is represented by the indicator variables \(\left\{Z_l^S\right\}_{S\in I_l \cup \{\emptyset\}}\).  Specifically, let \(\widetilde{X}\) represent the LP solution. For each \(l \in [L]\), a set \(T_l\) is randomly selected from \(I_l \cup \{\emptyset\}\) according to the following distribution:
\begin{equation}
    P(T_l = S) = 
    \begin{cases}
        1 - \sum_{S \in I_l} \widetilde{X}_{lS} & \text{if } S = \emptyset, \\
        \widetilde{X}_{lS} & \text{if } S \in I_l,
    \end{cases}
\end{equation}
where the decision variable \(Z_l^S = 1\) if \(S = T_l\) and \(Z_l^S = 0\) otherwise, for each \(S \in I_l \cup \{\emptyset\}\). 

Subsequently, each item \(p\) can only be assigned to bins where the corresponding selected sets contain \(p\), i.e., \(p \in T_l\). Since \(T_l \in I_l \cup \{\emptyset\}\), where \(I_l\) represents the family of all nonempty feasible assignments of items to bin \(l\) as defined in \eqref{eq:Il}, this condition ensures that the capacity constraints are satisfied.
 We then focus on the other two types of constraints: (a) The budget constraint; (b) The restriction that each item can be assigned at most once. We will make two types of decisions: (a) Whether to utilize bin \(l\), represented by the indicator variable \(\xi_l\); (b) Whether to assign item $p$ to bin $l$ (not necessarily utilized), represented by the indicator variable $Z_l^p$. This assignment occurs only when three conditions are met simultaneously:
\begin{enumerate}
\item Bin $l$ is assigned a set $S$ containing $p$ ($Z_l^S=1$ and $p\in S$)
\item Bin $l$ is utilized ($\xi_l=1$)
\item Item $p$ is assigned to bin $l$ (not necessarily utilized) ($Z_l^p=1$)
\end{enumerate}
In other words, item $p$ is assigned to a utilized bin $l$ only when $Z_l^p Z_l^S \cdot \xi_l = 1$ for some $S$ such that $p\in S$. Importantly, in our algorithm design, $Z_l^p$ is set to be independent of both $Z_l^S$ and $\xi_l$. The rationale and implications of this independence will be discussed in detail later in this section.

The expectation of the rewards can then be expressed as:
\[
\E\left[\sum_{l\in [L]}\sum_{S\in I_l}\sum_{p\in S} v_{lp} Z_l^p Z_l^S \cdot \xi_l\right] = \sum_{l\in [L]}\sum_{S\in I_l}\sum_{p\in S} v_{lp} \E\left[Z_l^p Z_l^S \cdot \xi_l\right]
\]
This equality is due to the linearity of expectation and the linear objective function. It allows us to analyze each term $\E\left[Z_l^p Z_l^S \cdot \xi_l\right]$ respectively. 

The realizations of $Z_{l}^S$ have been determined through the previously described randomized rounding process. Subsequently, we determine $\xi_l$ and $Z_{l}^p$ by simulating two separate online decision-making processes based on these realizations.

To determine $\xi_l$ for $l\in [L]$, we simulate an online decision-making process using the realizations of $H_l := \sum_{S\in I_l}Z_l^S \in \{0,1\}$, revealing results as $l$ goes \emph{from $1$ to $L$}. This process is adapted from an online knapsack problem decision-making process (see Section \ref{sec:online-knapsack}). The process consists of $L$ stages. At the $l$th stage, the algorithm decides whether to utilize bin $l$ (set $\xi_l = 1$) based on the realizations of $H_{l'}$ for $l' \leq l$ and the remaining budget. For $k\geq 3$, it ensures that the overall costs of bin utilization are within the budget, i.e. $\sum_{l\in [L]}c_l\cdot \xi_l\leq B$, while for $k<3$, the costs are approximately within the budget. It is important to note that, in contrast to these two alternative procedures, our proposed algorithm ensures strict adherence to the budget constraint in both cases.

For each item \( p \), to determine \( Z_l^p \) for \( l \in [L] \), we simulate an online decision-making process using the realizations of \( \widetilde{Y}_{lp} := \sum_{\substack{S \in I_l: p \in S}} Z_l^S\in\{0,1\} \). This process iterates over \( l \) in \emph{reverse order, from \( L \) to \( 1 \)}, and is based on the magician's mechanism (see Section \ref{sec:magician}). The process consists of \( L \) stages. At the \( (L-l+1) \)th stage\footnote{We use \( (L-l+1) \) instead of \( l \) to align with the descending order of \( l \), which simplifies subsequent index references of this sentence.},
the algorithm decides whether to assign item \( p \) to bin \( l \) (i.e., set \( Z_l^p = 1 \)) based on the realizations of \( \widetilde{Y}_{l'p} \) and the previous decisions \( Z_{l'}^p \) for \( l' > l \). 
 It is important to observe that this iteration order for determining $Z_l^p$ is inverse to the one used for determining $\xi_l$. This inverse ordering plays a crucial role in ensuring the independence of these decision variables in our analysis.

The magician's mechanism ensures that (i) $\sum_{l\in[L]}\sum_{S:p\in S}Z_{l}^pZ_l^S\leq 1$, (ii) $Z_l^p$ is independent of $Z_l^S$, and (iii) $\E[Z_l^p]\geq\frac{1}{2}$. Note that point (i) ensures that item $p$ will be assigned to at most one bin. Due to the inverse revealing order for determining $\xi_l$, $Z_l^p$ is also independent of $\xi_l$. Consequently:

\[ \E\left[Z_l^p Z_l^S \cdot \xi_l\right] =  \E\left[Z_l^p\right]\cdot \E\left[ Z_l^S \cdot \xi_l\right]\geq \frac{1}{2}\E\left[ Z_l^S \cdot \xi_l\right]\].

Therefore, 

\begin{align}\label{eq:reduction}
    \E\left[\sum_{l\in [L]}\sum_{S\in I_l}\sum_{p\in S} v_{lp} Z_l^p Z_l^S \cdot \xi_l\right] \geq \sum_{l\in [L]}\sum_{S\in I_l}\sum_{p\in S} \frac{v_{lp}}{2} \E\left[ Z_l^S \cdot \xi_l\right] =\frac{1}{2}\E\left[\sum_{l\in [L]}\sum_{S\in I_l}\sum_{p\in S} v_{lp} Z_l^S \cdot \xi_l\right]
\end{align}

\noindent The last term is then comparable to the outcome of the online knapsack problem decision-making process defined in Section \ref{sec:online-knapsack}, which helps us derive the desired result for these two alternative procedures. Finally, we compare our algorithm (Algorithm \ref{alg-no-magician-1d}) to these two procedures to establish the desired approximation ratio.

Having provided an overview of our approach, we now outline the structure of the two subsequent sections. In Section \ref{sec:prelim}, we introduce two key online mechanisms: the magician's mechanism and online fractional knapsack problems. These mechanisms are essential for developing the two alternative procedures, and we present their relevant results. Section \ref{sec:alg-1d} then details our proposed algorithm, along with the two alternative procedures and their associated theoretical outcomes.

\section{Essential Mechanisms for Algorithm Design and Analysis}\label{sec:prelim}

As discussed in Section \ref{sec:overview}, our algorithm's analysis is based on two key online optimization mechanisms. The first is the \emph{generalized \( \gamma \)-conservative magician}, originally proposed by Alaei et al.\ \cite{alaei2014bayesian}. This mechanism serves a dual purpose: it determines $Z_l^p$, as outlined in Section \ref{sec:overview}, and provides a part of the theoretical foundation for proving results in the second mechanism. We present this mechanism here for completeness. The second mechanism, which we will prove the efficacy of subsequently, adapts concepts from online knapsack problems and may be of independent interest to the broader optimization community.

\subsection{Generalized Magician's Problem \cite[Section 7]{alaei2014bayesian}}\label{sec:magician}

\begin{definition}(the generalized magician's problem). A magician is presented with a sequence of $L$ boxes one by one in an online fashion. The magician has $\delta$ units of mana. The magician can only open box $i$ if she has at least $1$ unit of mana. If box $i$ is opened, the magician loses a random amount of mana $X_i\in [0,1]$ drawn from a distribution specified on box $i$ by its cumulative distribution function $F_{X_i}$.  It is assumed that $\E\left[\sum_{i=1}^LX_i\right]\leq \delta$. The magician wants to open each box with ex ante probability at least $\gamma$, i.e. not conditioned on $X_i$ for $i=1,\ldots, L$, for a constant $\gamma\in [0,1]$ as large as possible.
\end{definition}

\noindent The problem can be solved with a near-optimal solution using the following mechanism. 

\begin{definition}\label{def:magician}(generalized $\gamma$-conservative magician) The magician 
makes a decision about each box as follows: Let {the random variable} $W_i$ denote the amount of mana lost prior to seeing the $i$th box, and $F_{W_i}(w):=P[W_i\leq w]$ denote the ex ante CDF of random variable $W_i$.  Define random binary variable $S_i$ conditional on $W_i$ as follows:
\begin{align*}
    P[S_i=1|W_i]&=\begin{cases}
          1,&W_i<\theta_i\\
          (\gamma-F^-_{W_i}(\theta_i))/(F_{W_i}(\theta_i)-F^-_{W_i}(\theta_i)), &W_i=\theta_i\\
          0, &W_i>\theta_i.
    \end{cases}\\
    \theta_i&=\min\{w|F_{W_i}(w)\geq \gamma\}\mbox{ and }F^-_{W_i}(w)=P[W_i<w].
\end{align*}
Then given $W_i$ and before seeing the $i$th box, the magician will decide to open the $i$th box if $S_i=1$.
\end{definition}

Note that $W_1=0$ with probability $1$, and therefore $\theta_1=0$. Then the CDF of $W_{i+1}$ and $\theta_{i+1}$ can be computed based on $W_{i}$ and $\theta_i$ in with dynamic programming. For more details, please see \cite[Section 7]{alaei2014bayesian}. 

One observation is that $S_i$ is independent to $X_i$ for all $i$. 
\begin{theorem}\label{thm:magician}
\cite[Theorem 7.3]{alaei2014bayesian} For any $\gamma\leq 1-\frac{1}{\sqrt{\delta}}$, we have $\theta_i\leq \delta-1$ for all $i$, and a generalized $\gamma$-conservative magician with $k$ units 
 of mana opens each box with an ex ante probability of exactly $\gamma$, i.e., $P(S_i=1)=\gamma$. If all $X_i$ are Bernoulli random variables, i.e., $X_i\in \{0,1\}$ for all $i$, and $\delta$ is an integer, then for any $\gamma\leq 1-\frac{1}{\sqrt{\delta+3}}$, we have $\theta_i\leq \delta-1$ for all $i$, and a generalized $\gamma$-conservative magician with $\delta$ units of mana opens each box with an ex ante probability of exactly $\gamma$. 
\end{theorem}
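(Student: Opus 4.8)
The plan is to split the statement into two parts, the opening-probability claim $P(S_i=1)=\gamma$ and the threshold claim $\theta_i\le k-1$, and to observe that the former is essentially immediate once the latter is known, so that the whole difficulty is concentrated in the threshold bound. First I would note that the randomized rule in Definition~\ref{def:magician} is engineered so that, conditioning on $W_i$ and summing over the three cases $W_i<\theta_i$, $W_i=\theta_i$, $W_i>\theta_i$, the law of total probability gives $P(S_i=1)=F^-_{W_i}(\theta_i)+\big(\gamma-F^-_{W_i}(\theta_i)\big)=\gamma$ exactly. This computation is valid only if the magician can always honour the decision $S_i=1$, i.e. if she has at least one unit of mana left whenever the rule tells her to open. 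Since $S_i=1$ forces $W_i\le\theta_i$, the bound $\theta_i\le k-1$ guarantees $k-W_i\ge 1$, so the rule is executable and $P(S_i=1)=\gamma$ follows. Hence it suffices to prove $\theta_i\le k-1$ for all $i$.

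Next I would restate the threshold bound as a tail bound. Because $\theta_i=\min\{w:F_{W_i}(w)\ge\gamma\}$, the inequality $\theta_i\le k-1$ is equivalent to $F_{W_i}(k-1)\ge\gamma$, i.e. $P(W_i> k-1)\le 1-\gamma$ (which in the integer Bernoulli case reads $P(W_i=k)\le 1-\gamma$, since the clamping keeps $W_i\le k$). I would prove this by induction on $i$, exploiting two structural facts: the load is monotone, $W_{i+1}=W_i+S_iX_i\ge W_i$, so the tail probabilities $P(W_i> k-1)$ are nondecreasing and it is enough to control them in the limit; and the path is absorbed near the cap, because once $W_i$ exceeds $\theta_i$ the rule sets $S_i=0$ and no further mana is consumed on that sample path. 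Under the inductive hypothesis $\theta_j\le k-1$ (so that $P(S_j=1)=\gamma$ and $S_j$ is independent of $X_j$), the mean load satisfies $\E[W_{i+1}]=\sum_{j\le i}\gamma\,\E[X_j]\le\gamma k$, and the heart of the argument is a recursion for the overflow mass $P(W_{i+1}> k-1)$ in terms of $P(W_i> k-1)$ and the conditional distribution at the top threshold state.

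The main obstacle is obtaining a tail bound strong enough to survive the regime $\gamma\to 1$. A one-shot concentration inequality is hopeless here: with $\gamma=1-1/\sqrt{k}$ the mean $\E[W_i]\approx\gamma k$ sits only $k(1-\gamma)=\sqrt{k}$ below the cap $k$, while the standard deviation of the \emph{unclamped} sum $\sum_j S_jX_j$ is also of order $\sqrt{k}$, so Markov or one-sided Chebyshev yields only $P(W_i> k-1)=O(1)$ rather than $\le 1-\gamma$. The point is that the true process is far more concentrated than the unclamped sum precisely because the rule stops adding mana as $W_i$ approaches the cap; the recursion for the overflow mass must therefore be carried through exactly, and bounding its fixed point reduces to a quadratic inequality in $1-\gamma$ whose solution is exactly $1-\gamma\ge 1/\sqrt{k}$. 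In the Bernoulli case the integrality of $W_i$ collapses the tail event to the single atom $\{W_i=k\}$ and lets one account for the last unit increment more tightly, sharpening the admissible range to $\gamma\le 1-1/\sqrt{k+3}$; recovering this exact constant, rather than a weaker $1-\Theta(1/\sqrt{k})$ bound, is the delicate part of the proof.
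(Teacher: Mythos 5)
Your reduction of the opening-probability claim to the threshold bound is exactly right and matches the paper: given $\theta_i\le k-1$, the rule is executable (since $S_i=1$ forces $W_i\le\theta_i\le k-1$, leaving at least one unit of mana), and the three-case total-probability computation gives $P(S_i=1)=F^-_{W_i}(\theta_i)+\bigl(\gamma-F^-_{W_i}(\theta_i)\bigr)=\gamma$. You also correctly identify that the whole difficulty is the bound $\theta_i\le k-1$, that one-shot concentration is too weak in the regime $\gamma=1-1/\sqrt{k}$, and that the final arithmetic is the quadratic $k(1-\gamma)^2\ge 1$.

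However, there is a genuine gap at the heart of your argument: the ``recursion for the overflow mass'' $P(W_{i+1}>k-1)$ in terms of $P(W_i>k-1)$ is asserted but never exhibited, and it is unlikely to close in that form. The amount of new mass pushed above $k-1$ at step $i$ depends on how the distribution of $W_i$ is arranged \emph{below} the threshold (how much probability sits within distance $X_i$ of $\theta_i$), not merely on the tail probability, so a scalar recursion in $P(W_i>k-1)$ alone does not determine the next tail. The proof in Appendix~\ref{appendix:magician-proof} (following Alaei) resolves exactly this by tracking a first-moment potential instead of a tail: it defines $d_i=\int_0^{\theta_i+1}(\theta_i+1-w)\,dF_{W_i}(w)$, the average distance of the ``sand'' from $\theta_i+1$, proves the invariant $d_i<\tfrac{1}{1-\gamma}$ (Theorem~\ref{thm:sand}, the one nontrivial lemma), and combines it with $d_i'=\E[W_i]\le\gamma\sum_{j<i}\E[X_j]\le\gamma k$ via the identity $d_i+d_i'=\theta_i+1$ to get $\theta_i+1<\gamma k+\tfrac{1}{1-\gamma}$; the quadratic in $1-\gamma$ you anticipated then yields $\theta_i\le k-1$ whenever $\gamma\le 1-1/\sqrt{k}$. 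To repair your proof you would need to replace the tail recursion by this potential (or prove an equivalent moment invariant); note also that the sharpened Bernoulli constant $1-1/\sqrt{k+3}$ is not proved in the paper either but cited from Alaei et al., so your closing remark about ``recovering the exact constant'' describes work that neither you nor the paper actually carries out.
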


\begin{remark}
In Theorem \ref{thm:magician}, $\delta$ can be non-integer when $X_i$ is not restricted to be Bernoulli variable as mentioned in \cite{alaei2013online} and by its proof. 
\end{remark}

\noindent {\bf Application in determining $Z_l^p$:} For each item $p$, we construct a generalized $\frac{1}{2}$-conservative magician with one unit of mana. This magician faces $L$ boxes, where the mana cost for opening each box follows a Bernoulli distribution. This distribution is determined by the distribution of $Z_l^S$ resulting from the randomized rounding process. The magician's decisions on whether to open these boxes directly correspond to the determination of $Z_l^p$. For a detailed description of this process, please refer to Step 3 of the procedures outlined in Lemma \ref{lem:kgeq3} and Lemma \ref{lem:kleq3}.

\subsection{Online Fractional Knapsack Problems}\label{sec:online-knapsack}

\begin{definition}[Online Fractional Knapsack Problem with Pre-arranged Arrival Order]\label{def:online-knapsack}
Consider a sequence of \( n \) independent rounds. In each round \( i \), an object is characterized by a positive reward \( r_i \), a non-negative weight \( w_i \), and a positive appearance probability \( p_i \). Additionally, there is a specified capacity \( \delta \geq 1 \), ensuring that the cumulative weight of the objects accepted does not exceed this capacity. The appearance of each object follows a Bernoulli distribution. The problem is subject to two constraints:
\[
\sum_{i=1}^{n} w_i \cdot p_i \leq \delta, \quad 0 < w_i \leq 1.
\]

In this setting, we assume full knowledge of the object distribution. At the \( i \)-th round, we observe the appearance of object \( i \) before deciding a fraction \( \beta_i \in [0, 1] \) of object \( i \) to accept. Acceptance is conditioned on the remaining capacity being at least \( \beta_i \cdot w_i \). Accepting the object results in a reduction of the remaining capacity by \( \beta_i \cdot w_i \) and a secured reward of \( \beta_i \cdot r_i \). All decisions, once made, are irreversible.

It is assumed that the sequence of object appearances can be pre-determined before the initiation of the rounds.

The goal is to devise an algorithm that optimizes an efficiency coefficient \( \alpha \in (0, 1) \) such that the expected reward, at the end of \( n \) rounds, is \( \alpha \cdot \sum_{i=1}^{n} r_i \cdot p_i \).
\end{definition}

\begin{remark}
    The term ``object" is used here instead of ``item" to provide a distinction from the ``item" used in GBAP. Specifically, in the analysis presented in Section \ref{sec:alg-1d}, the allocation of objects in this online knapsack problem will be compared to the utilization of bins in GBAP.
\end{remark}

\begin{lemma}[Expected Reward for Prearranged Arrival Order]\label{expected-reward-knapsack}
Consider the problem defined in Definition \ref{def:online-knapsack}. 
Assume the order of object appearances is prearranged such that

\begin{align}\label{eq:pre-order}
    \frac{r_1}{w_1} \geq \frac{r_2}{w_2} \geq \ldots \geq \frac{r_n}{w_n}.
\end{align}

Furthermore, let the strategy be to accept objects fully in order of appearance until the remaining capacity is insufficient to accept the next appearing object fully. At this point, accept this next appearing object fractionally to exactly exhaust the remaining capacity. Under these conditions, the expected reward generated at the end of \( n \) rounds is
\[
\max\left\{\frac{1}{2},\left(1-\frac{1}{\sqrt{\delta}}\right)\right\} \cdot \sum_{i \in [n]} r_i \cdot p_i.
\]
\end{lemma}

\begin{proof}
We begin by establishing the  ratio of \(\left(1 - \frac{1}{\sqrt{\delta}}\right)\). According to Definition \ref{def:magician} and Theorem \ref{thm:magician}, the problem defined in Definition \ref{def:online-knapsack} admits a solution with this ratio when using the Magician's mechanism. Note that in this case, the acceptance fraction $\beta_i\in \{0,1\}$ for all $i\in [n]$ due to the definition of Magician's mechanism.  It is evident that the strategy outlined in Lemma \ref{expected-reward-knapsack} outperforms the Magician's mechanism in terms of efficiency and reward due to the pre-arranged order of objects as in \eqref{eq:pre-order}. Therefore, this concludes the proof for this part.

As for the ratio of $\frac{1}{2}$, we will conduct the similar comparison, but with the help of the method in Manshadi et al. \cite{manshadi2021fair}. In  their work, they defined a method such that at the end of $n$ rounds, it achieves that $\E\left[\min_{i\in [n]}\beta_i\right]\geq \frac{1}{2}$. For more details, please see Theorem 2 in \cite{manshadi2021fair} when $\mu \leq 1$. This means that the method finally yields expected reward at least $$\frac{1}{2}\cdot \sum_{i \in [n]} r_i \cdot p_i.$$
Again, the greedy strategy outlined in Lemma \ref{expected-reward-knapsack} outperforms this method in terms of  reward due to the pre-arranged order of objects as in \eqref{eq:pre-order}. Therefore, this concludes the proof.
\end{proof}

\noindent {\bf Application in determining $\xi_l$ and deriving approximation ratio:} To apply the results of Lemma \ref{expected-reward-knapsack} in determining whether to utilize a bin $l$ (i.e., determine $\xi_l$ as mentioned in Section \ref{sec:overview}), we simulate the online knapsack problem in Algorithm \ref{alg-no-magician-1d}. Specifically, after a randomized rounding process to realize $Z_l^S$, we reindex the $L$ bins following the rule in \eqref{eq:pre-order} of Algorithm \ref{alg-no-magician-1d}. We then simulate the online knapsack problem as described in Section \ref{sec:overview}. This approach serves two purposes: (1) it enables us to determine $\xi_l$ by simulating the decision-making process for the online knapsack problem, and (2) it allows us to compare the last term in \eqref{eq:reduction} to the outcome of the online knapsack problem, enabling us to leverage the bound established in Lemma \ref{expected-reward-knapsack} to derive the desired approximation ratio.

\section{Approximation Algorithm} \label{sec:alg-1d}
In this section, we consider solving GBAP. Let $k$ be the ratio of the budget to the maximum bin cost: $$k:=\frac{B}{\max_{l\in[L]}c_{l}}.$$

\noindent For the sake of simplicity and without loss of generality, we can scale the budget and costs to adhere to the following assumption:
\begin{assumption}\label{assumption:budget-scaling}
The given budget is $k$, and for all $l\in [L]$, $c_{l}\leq 1$, with $\max_{l\in[L]}c_{l}=1$.
\end{assumption}



\noindent The LP relaxation  can be solved in polynomial time by solving its dual with ellipsoid method as in \cite{fleischer2011tight,perivier2021real}.

\begin{alg}\label{alg-no-magician-1d}
~
\begin{enumerate}
    \item Find a solution to the LP relaxation of (\ref{eq:2d-IP-config-original}) and let $\widetilde{X}$ be the solution. 
    \item Reorder the bins so that 
    \begin{align}\label{eq:reorder}
    \frac{\sum_{S\in I_l}\left(\widetilde{X}_{lS}\sum_{p\in S}v_{lp}\right)}{c_{l}\sum_{S\in I_l}\widetilde{X}_{lS}}
    \geq
    \frac{\sum_{S\in I_{l'}}\left(\widetilde{X}_{l'S}\sum_{p\in S}v_{l'p}^{S}\right)}{c_{l'}\sum_{S\in I_{l'}}\widetilde{X}_{l'S}}~~~~~~\mbox{ for } 1\leq l<l'\leq L.
\end{align}
Note that here if the denominators are zeros, the corresponding numerators are also zeros, and the corresponding bins can be ignored in the following rounding process. Thus,  we can define $\frac{0}{0}:=0$ in (\ref{eq:reorder}).
    \item Independently for each $l\in[L]$, 
    randomly select a set $T_l$ from $I_l\cup\{\emptyset\}$ following the distribution below:
   \begin{equation*}
        P({T}_l=S)=
        \begin{cases}
      1-\sum_{S\in I_l}\widetilde{X}_{lS} & \text{if }S=\emptyset\\
      \widetilde{X}_{lS} & \text{if }S\in I_l
    \end{cases} .
    \end{equation*}

Let the (random) decision  variable be $Z_l^S$, i.e., $Z_l^S=1$ if $S=T_l$, and $Z_l^S=0$ otherwise for $S\in I_l\cup\{\emptyset\}$.
    \item Temporarily assign all the elements in $T_l$ to bin $l$ for each $l\in [L]$.
    \item For each item $p$, retain it in the bin where it yields the highest reward $v_{lp}$ and remove it from others.
    \item Open all the bins that have been assigned with at least one item, and close all the other bins. If this assignment does not exceed the budget, then we take this assignment as the final solution.
    \item Otherwise, discard this assignment. We construct a new assignment.
    \item Let $B_0=k$. Let $l$ be from $1$ to $L$. At $l$th iteration, let $B_l:=B_{l-1}-{c}_{l}\sum_{S\in I_l}Z_l^S\geq 0$.
    \item If $k\geq 3$, let $\mathcal{L}^*:=\{l:B_l\geq 0,~\sum_{S\in I_l}Z_l^S>0\}$. Assign $T_l$ to $l$ for each $l\in \mathcal{L}^*$ and open it. For each item $p$, retain it in the bin where it yields the highest reward $v_{lp}$ and remove it from others.
    \item If $k<3$, let $\mathcal{L}^*:=\{l:B_{l-1}> 0,~\sum_{S\in I_l}Z_l^S>0\}$. Assign $T_l$ to $l$ for each $l\in \mathcal{L}^*$ and \emph{temporarily} open it. For each item $p$, retain it in the bin where it yields the highest reward $v_{lp}$ and remove it from others. Let $l^* = \max_{l\in \mathcal{L}^*}l$. Then we open the bins in either $\mathcal{L}^*\backslash\{l^*\}$ or in $\{l^*\}$, whichever generated more rewards.
\end{enumerate}
\end{alg}

Our proof is divided into two cases: $k \geq 3$ and $k < 3$. For both scenarios, we employ a two-step approach. First, we analyze the expected reward generated by a specific procedure of assignment based on two mechanisms: the magician's mechanism defined in Definition \ref{def:magician} and the mechanism adapted from the online fractional knapsack problem discussed in Section \ref{sec:online-knapsack}. Second, we compare the rewards generated by Algorithm \ref{alg-no-magician-1d} with those from this constructed procedure.

We begin by addressing the case where $k \geq 3$. For this scenario, we construct a specific procedure and provide its analysis in the following lemma.

\begin{lemma}\label{lem:kgeq3}
    Assume $k\geq 3$. With the same notations as in Algorithm ~\ref{alg-no-magician-1d}, we define a procedure as follows:
\begin{enumerate}
    \item For each $l \in [L]$, let $H_l = \sum_{S \in I_l} Z_l^S$, which indicates whether any set $S$ has been selected for bin $l$ in the randomized rounding process. Define $\xi_l = 1$ if $B_{l} \geq 0$ and $H_l = 1$. Otherwise, set $\xi_l = 0$. Here $B_{l}$ represents the remaining budget after considering the first $l$ bins. Thus, $\xi_l = 1$ indicates that bin $l$ is utilized, which occurs when there is sufficient budget ($B_{l} \geq 0$) and a set has been selected for this bin ($H_l = 1$).
    
    \item For every $l \in [L]$ and $p \in [P]$, introduce a variable $\widetilde{Y}_{lp}$, given by
    \[
    \widetilde{Y}_{lp} = \sum_{\substack{S \in I_l: p \in S}} Z_l^S.
    \]
    It indicates whether item $p$ is included in the selected set for bin $l$, and its probability distribution is
    \[
    P(\widetilde{Y}_{lp} = y) = \begin{cases}
        1 - \sum_{\substack{S \in I_l:  p \in S}} \widetilde{X}_{lS}, & \text{if } y = 0, \\
        \sum_{\substack{S \in I_l: p \in S}} \widetilde{X}_{lS}, & \text{if } y = 1.
    \end{cases}
    \]
    
        \item For each item $p\in [P]$,  create a generalized $\frac{1}{2}$-conservative magician with $1$ unit of  mana (called \emph{type-p} magician). The magician of item $p$ is presented with a sequence of $L$ \emph{type-$p$} boxes \emph{from box $L$ to box $1$}. The distribution of $\widetilde{Y}_{lp}$ is written on type-$p$ box $l$. The amount of mana the magician will lose if open type-$p$ box $l$ is equal to the value of $\widetilde{Y}_{lp}\in \{0,1\}$. The magician will decide whether to open type-$p$ box $l$ following the mechanism defined in Definition \ref{def:magician}.  Let the (random) decision  variable be $Z_{l}^p$, i.e., $Z_l^p=1$ if the magician for $p$ decides to open type-$p$ box $l$, and  $Z_l^p=0$ otherwise.
\end{enumerate}

\noindent If we utilize bin $l$ when $\xi_l = 1$, and assign item $p$ to bin $l$ whenever $Z_l^p\cdot  \widetilde{Y}_{lp} \cdot \xi_l = 1$, then the assignment is feasible, which means that:
\begin{enumerate}[(a)]
\item Each passenger is assigned to at most one utilized bin.
\item The total cost of all utilized bins is within the given budget $k$.
\item The capacity constraints are respected.

\end{enumerate}
Moreover, the expected approximation ratio derived from this assignment is at least $\left(\frac{k-1}{2k}\right)\cdot\left(1-\frac{1}{\sqrt{k}}\right)>\frac{1}{8}$.
\end{lemma}

\begin{proof}
We first prove its feasibility. We start with point (a). 
In Step 3, $Z_l^p$ represents the decision of the type-$p$ magician on whether to open the type-$p$ box with index $l$. Meanwhile, $\widetilde{Y}_{lp}$ indicates whether opening the box requires spending 1 unit of mana. Based on the definition of the magician's mechanism in Definition \ref{def:magician} and the results stated in Theorem \ref{thm:magician}, the type-\( p \) magician will spend at most one unit of mana during this process. Therefore, \(\sum_{l=1}^L Z_l^p \cdot \widetilde{Y}_{lp} \leq 1\). Since \( Z_l^p \cdot  \widetilde{Y}_{lp} \cdot \xi_l \leq Z_l^p \cdot  \widetilde{Y}_{lp} \), so if we assign item \( p \) to bin $l$ when \(Z_l^p \cdot  \widetilde{Y}_{lp} \cdot \xi_l =1\), it can can only be assigned at most once through this process. As for point (b) and (c), they can be checked directly by definition.

The expected reward derived from the procedure in Lemma \ref{lem:kgeq3} can be expressed as:
\begin{align}
    \E\left[\sum_{l\in[L]}\sum_{p\in [P]}v_{lp}Z_l^p\cdot \widetilde{Y}_{lp}\cdot \xi_l\right]
    &=\E\left[\sum_{l\in[L]}\sum_{S\in I_l}\sum_{p\in S}v_{lp}Z_l^pZ_l^S\cdot \xi_l\right]= \sum_{l\in[L]}\sum_{S\in I_l}\sum_{p\in S}v_{lp}\E\left[Z_l^pZ_l^S\cdot \xi_l\right]\nonumber\\
    &=\sum_{l\in[L]}\sum_{S\in I_l}\sum_{p\in S}v_{lp}\E\left[Z_l^p\right]\E\left[Z_l^S\cdot \xi_l\right]\label{eq:independence-2}\\
    &\geq \frac{1}{2}\sum_{l\in[L]}\sum_{S\in I_l}\sum_{p\in S}v_{lp}\E\left[Z_l^S\cdot \xi_l\right]\label{eq:Zlp-replace}\\
   &=\frac{1}{2}\E\left[\sum_{l\in[L]}\sum_{S\in I_l}\sum_{p\in S}v_{lp}Z_l^S\cdot \xi_l\right] = \frac{1}{2}\sum_{l\in[L]}\E\left[\sum_{S\in I_l}\sum_{p\in S}v_{lp}Z_l^S\Bigg\vert \xi_l = 1\right]\cdot P\left[\xi_l=1\right]\nonumber\\
   & = \frac{1}{2}\sum_{l\in[L]}\E\left[\sum_{S\in I_l}\sum_{p\in S}v_{lp}Z_l^S\Bigg\vert H_l = 1\right]\cdot P\left[\xi_l=1\right]\label{eq:xi=H}
\end{align}

\noindent Eq. (\ref{eq:independence-2}) results from observing that $Z_l^p$ is independent of $\xi_l$, and $Z_l^S$ for each $l\in[L]$ and $S\in I_l$. This independence arises because the magician's mechanism ensures that $Z_l^p$ only depends on $\widetilde{Y}_{l'}^p$ and $Z_{l'}^p$ for $l' \in \{l+1, l+2, \ldots, L\}$ as defined in Step 3 of the procedure, while $\xi_l$ depends on $Z_{l'}^S$ for $l' \in \{1, 2, \ldots, l\}$ and $S\in I_l$. \eqref{eq:Zlp-replace} occurs because $Z_l^p$ represents the decision of a type-$p$ magician to open the box or not, constrained by having only one unit of mana, leading to $E[Z_l^p]\geq \frac{1}{\sqrt{1+3}}=\frac{1}{2}$ according to Theorem \ref{thm:magician}.

As for Eq. (\ref{eq:xi=H}), it is due to $\xi_l$ depending only on $H_l$ and $H_{l'}$ for $l'<l$, and $Z_l^S$ being independent of $H_{l'}$ for $l'<l$.

Let us define 
\[ r_l := \E\left[\sum_{S \in I_l}\sum_{p \in S} v^S_{lp} Z_l^S \mid H_l = 1\right] = \frac{\sum_{S \in I_l}\left(\widetilde{X}_{lS} \sum_{p \in S} v_{lp}\right)}{\sum_{S \in I_l} \widetilde{X}_{lS}}. \] 
Subsequently, Eq.~\eqref{eq:xi=H} can be expressed as

\begin{align}\label{eq:reduce-online-knapsack}
    \frac{1}{2}\sum_{l \in [L]} \E\left[r_l \cdot \xi_l\right] = \frac{1}{2} \E\left[\sum_{l \in [L]} r_l \cdot \xi_l\right].
\end{align}

To establish the lower bound of Eq.~(\ref{eq:xi=H}), we construct an instance of the online fractional knapsack problem, as in Definition~\ref{def:online-knapsack}, consisting of \( L \) rounds. In each round \( l \), an object appears with a probability \( P(H_l = 1) \), possessing a weight \( c_l \) and a reward \( r_l \). The initial capacity is equal to the budget $k$. As specified in Step 2 of Algorithm~\ref{alg-no-magician-1d}, it satisfies that \( \frac{r_1}{c_1} \geq \frac{r_2}{c_2} \geq \ldots \geq \frac{r_L}{c_L} \), allowing us to apply Lemma~\ref{expected-reward-knapsack}.

In the constructed instance, if we fully accept each appearing object in sequence until the remaining capacity is inadequate for the full acceptance of the next appearing object, and then accept the next object fractionally to precisely exhaust the remaining capacity, then, according to Lemma~\ref{expected-reward-knapsack} with $k\geq 3$, the expected rewards obtained in this online knapsack problem are at least:

\begin{align*}
    \left(1-\frac{1}{\sqrt{k}}\right)\sum_{l\in [L]}r_l\cdot P(H_l=1)=\left(1-\frac{1}{\sqrt{k}}\right)\sum_{l\in[L]}\sum_{p\in [P]}\sum_{S\in I_l:p\in S}v_{lp}\widehat{X}_{lS}.
\end{align*}

If we instead only accept objects fully and stop when the remaining capacity is insufficient for the next appearing object (i.e., we do not accept any object fractionally), the reward equals $\sum_{l\in [L]}r_l\cdot \xi_l$. Since $c_l\leq 1$, the capacity is $k$, and $\frac{r_1}{c_1}\geq \frac{r_2}{c_2}\geq \ldots\geq \frac{r_L}{c_L}$, then we have that 

\begin{align}
    \E\left[\sum_{l\in [L]}r_l\cdot \xi_l\right] &\geq \frac{k-1}{k}\cdot\left(1-\frac{1}{\sqrt{k}}\right)\sum_{l\in [L]}r_l\cdot P(H_l=1)\nonumber\\
    &= \frac{k-1}{k}\cdot\left(1-\frac{1}{\sqrt{k}}\right)\sum_{l\in[L]}\sum_{p\in [P]}\sum_{S\in I_l:p\in S}v_{lp}\widehat{X}_{lS}.\label{eq:discard-last-fractional}
\end{align}

Summarizing \eqref{eq:xi=H},\eqref{eq:reduce-online-knapsack} and \eqref{eq:discard-last-fractional}, we conclude that

$$\E\left[\sum_{l\in[L]}\sum_{S\in I_l}\sum_{p\in S}v_{lp}Z_l^pZ_l^S\cdot \xi_l\right] =\frac{1}{2}\E\left[\sum_{l\in [L]}r_l\cdot \xi_l\right] \geq \frac{k-1}{2k} \left(1-\frac{1}{\sqrt{k}}\right)\sum_{l\in[L]}\sum_{p\in [P]}\sum_{S\in I_l:p\in S}v_{lp}\widehat{X}_{lS}.$$

\noindent This completes the proof.
\end{proof}

\begin{theorem}\label{thm:kgeq3}
    Assume $k\geq 3$. The solution given by Algorithm \ref{alg-no-magician-1d} is feasible. The expected approximation ratio of the solution given by Algorithm \ref{alg-no-magician-1d} is at least $\left(\frac{k-1}{2k}\right)\cdot\left(1-\frac{1}{\sqrt{k}}\right)>\frac{1}{8}$.
\end{theorem}

\begin{proof}
The feasibility can be verified directly. Given the same realization of $Z_l^S$ in Step 3 of Algorithm \ref{alg-no-magician-1d}, both Algorithm \ref{alg-no-magician-1d} and the procedure defined in Lemma \ref{lem:kgeq3} will utilize identical bins.

Moreover, under this realization, item $p$ can only be assigned to bins where $Z_{l}^S=1$ and $p\in S$. This condition holds true for both Algorithm \ref{alg-no-magician-1d} and the procedure in Lemma \ref{lem:kgeq3}.

Given the realizations of $Z_l^S$,  the optimal reward is achieved by retaining each item $p$ in the bin where it yields the highest reward $v_{lp}$, subject to $Z_{l}^S=1$ and $p\in S$. This is precisely what Step 9 of Algorithm \ref{alg-no-magician-1d} accomplishes.

Therefore, the expected reward of Algorithm \ref{alg-no-magician-1d} will be at least as high as that derived from the procedure defined in Lemma \ref{lem:kgeq3}.
\end{proof}

The above proof is constructed by creating a procedure that produces a feasible assignment solution, then comparing Algorithm \ref{alg-no-magician-1d} with it to derive the desired result. For the case when $k < 3$, we will instead produce a procedure that generates an assignment solution that may violate the budget constraint. We will then compare this with Algorithm \ref{alg-no-magician-1d} to derive the desired result. Although the approaches differ in terms of solution feasibility of the constructed procedures, the key ideas remain similar. Both rely on the magician's mechanism and the results from the online fractional knapsack problem presented in Section \ref{sec:prelim}.

\begin{lemma}\label{lem:kleq3}
    Assume $k< 3$. With the same notations as in Algorithm ~\ref{alg-no-magician-1d}, we define a procedure as follows:
\begin{enumerate}
    \item For each $l \in [L]$, let $H_l = \sum_{S \in I_l} Z_l^S$. Set $\xi_l = 1$ if $B_{l-1} \geq 0$ and $H_l = 1$. Otherwise, set $\xi_l = 0$. This ensures that bin $l$ is utilized ($\xi_l = 1$) only when the remaining budget for $l$ is non-negative ($B_{l-1} \geq 0$) and a set has been selected for this bin ($H_l = 1$).
    
    \item For every $l \in [L]$ and $p \in [P]$, introduce a random variable $\widetilde{Y}_{lp}$, given by
    \[
    \widetilde{Y}_{lp} = \sum_{\substack{S \in I_l: p \in S}} Z_l^S.
    \]
    Its probability distribution is
    \[
    P(\widetilde{Y}_{lp} = y) = \begin{cases}
        1 - \sum_{\substack{S \in I_l:  p \in S}} \widetilde{X}_{lS}, & \text{if } y = 0, \\
        \sum_{\substack{S \in I_l: p \in S}} \widetilde{X}_{lS}, & \text{if } y = 1.
    \end{cases}
    \]
    
        \item For each item $p\in [P]$,  create a generalized $\frac{1}{2}$-conservative magician with $1$ unit of  mana (called \emph{type-p} magician). The magician of item $p$ is presented with a sequence of $L$ \emph{type-$p$} boxes \emph{from box $L$ to box $1$}. The distribution of $\widetilde{Y}_{lp}$ is written on type-$p$ box $l$. The amount of mana the magician will lose if open type-$p$ box $l$ is equal to the value of $\widetilde{Y}_{lp}\in \{0,1\}$. The magician will decide whether to open type-$p$ box $l$ following the mechanism defined in Definition \ref{def:magician}.  Let the (random) decision  variable be $Z_{l}^p$, i.e., $Z_l^p=1$ if the magician for $p$ decides to open type-$p$ box $l$, and  $Z_l^p=0$ otherwise.
\end{enumerate}

\noindent If we utilize bin $l$ when $\xi_l = 1$, and assign item $p$ to bin $l$ whenever $Z_l^p\cdot  \widetilde{Y}_{lp} \cdot \xi_l = 1$ for some $S \in I_l$ and $p\in S$, then the assignment satisfies:
\begin{enumerate}[(a)]
\item Each passenger is assigned to at most one utilized bin.
\item Let the total cost of all utilized bins be $k'$ and define $l^* = \max\{l\in [L]: \xi_l=1\}$. Then we have $k'-c_{l^*} \leq k$. In other words, if we remove the utilized bin with the largest index, the total cost of the remaining bins is within the given budget $k$.
\item The capacity constraints are respected.

\end{enumerate}
Moreover, the expected approximation ratio derived from this assignment is at least $\frac{1}{4}$.
\end{lemma}

\begin{proof} As the proof employs reasoning similar to that used in Lemma \ref{lem:kgeq3}, we have placed the full proof in Appendix \ref{appendix:proof-of-lem-kleq} to maintain the flow of the main text.
\end{proof}

\begin{theorem}
    Assume $k< 3$. The solution given by Algorithm \ref{alg-no-magician-1d} is feasible. The expected approximation ratio of the solution given by Algorithm \ref{alg-no-magician-1d} is at least $\frac{1}{8}$.
\end{theorem}

\begin{proof}
The feasibility can be verified directly. Given the same realization of $Z_l^S$ in Step 3 of Algorithm \ref{alg-no-magician-1d}, the algorithm will temporarily open the same bins as those utilized by the procedure defined in Lemma \ref{lem:kleq3}.

Moreover, under this realization, item $p$ can only be assigned to bins where $Z_{l}^S=1$ and $p\in S$. This condition holds true for both Algorithm \ref{alg-no-magician-1d} and the procedure in Lemma \ref{lem:kleq3}.

Given the realizations of $Z_l^S$, the optimal reward for these \emph{temporarily opened} bins is achieved by retaining each item $p$ in the bin where it yields the highest reward $v_{lp}$, subject to $Z_{l}^S=1$ and $p\in S$. This is precisely what Step 10 of Algorithm \ref{alg-no-magician-1d} does for those \emph{temporarily} opened bins. Consequently, the total reward from these temporarily opened bins is at least as large as that derived from the procedure in Lemma \ref{lem:kleq3}.

The key difference from Theorem \ref{lem:kgeq3} is that the temporarily opened bins might violate the budget constraint. To address this, we compare the reward of the temporarily opened bin with the largest index ($l^*$ in Step 10 of Algorithm \ref{alg-no-magician-1d}) to the reward of all other temporarily opened bins ($\mathcal{L}^*\backslash\{l^*\}$ in Step 10 of Algorithm \ref{alg-no-magician-1d}). Whichever is larger will retain at least half of the total reward.

Moreover, based on point (b) of Lemma \ref{lem:kleq3}, either choice will stay within the budget. This comparison and selection process is implemented in Step 10 of Algorithm \ref{alg-no-magician-1d}.

Therefore, the expected reward of Algorithm \ref{alg-no-magician-1d} will be at least half of that derived from the procedure defined in Lemma \ref{lem:kleq3}, which yields the expected approximation ratio as $\frac{1}{2}\cdot \frac{1}{4} = \frac{1}{8}$.
\end{proof}



\section{Applications in transportation}\label{sec:applications}


In this section, we introduce an application of GBAP in a multi-modal Mobility-on-Demand (MoD) setting, illustrating how the framework can be adapted to address practical transportation problems. Consider a MoD operator managing a fleet of single-capacity cars and high-capacity buses. The operator aims to provide a multi-modal service with buses operating on fixed routes and cars functioning in a demand-responsive manner. An individual trip request can be served by a bus, a car, or a combination of both.

This scenario could be envisioned as a transit agency planning a multi-modal service where traditional buses are complemented by a demand-responsive service, or a private rideshare operator integrating fixed-route shuttles with taxis. This problem was investigated by \cite{perivier2021real}. There is a wide range of literature studying the traditional line-planning problem. For instance, \cite{borndorfer2007column} introduced a new multicommodity flow model for line planning. Their model aims to strike a balance between minimizing operating costs for transport companies and reducing travel times for passengers. On the other hand, \cite{bertsimas2021data} provides a holistic approach for transit line planning using column generation. For additional literature on line planning problems in public transportation, readers are directed to \cite{schobel2012line}.

By applying GBAP to this MoD scenario, we can optimize resource allocation and route planning, maximizing the efficiency and service quality of the transportation system while adhering to budget constraints. This demonstrates the versatility and practical value of GBAP in addressing complex, real-world transportation challenges.

In this problem, the transit network is modeled as an undirected weighted graph $G=(V,E)$, where $V$ contains $n$ nodes as potential origin/destination nodes, and $E$ is the set of edges representing the shortest paths between the nodes. The weights of the edges are the cost required to cross an edge $e \in E$. A set $\mathcal{L}$ of candidate bus routes/lines on $G$ is given to the operator. Each candidate route / line is defined as a fixed sequence of consecutive edges of $G$, and it has an operating cost $c_l$. There is a budget $B$ for operating bus lines. Each bus has capacity $C$ and each line $l$ has frequency $f_l$. Thus, the overall capacity of a bus line is $C\cdot f_l$. The cars cover 'first-last mile' travel. It is assumed that there are no inter-bus transfers for each passenger. If the passenger $p$ is assigned to line $l$, then $v_{lp}\geq 0$ is defined as the welfare value contributed to the system by this matching. The operator is responsible for selecting a set of bus lines from $\mathcal{L}$ to operate in a way that maximizes the overall welfare of the system. The value of $v_{lp}$ is calculated according to the rules given, e.g. binary based on whether the matching is feasible or the number of car miles saved due to the matching (for more details, please see \cite{perivier2021real}). The goal is to find a subset of lines to be operated and an assignment of passengers to opened lines to maximize the welfare of the system, such that: 
    (a) the cost of opened lines does not exceed the budget;
    (b) the assignment of passengers and their trips to the opened line $l$ respect the capacity constraints of each edge on line $l$; 
    (c) a passenger is assigned to at most one line.

Now we can describe how this problem fits into the proposed framework. For each line $l$ with $n_l$ edges, we map it to a bin in GBAP with an $n_l$-dimensional capacity vector. Each entry in the capacity vector of line $l$ is set to $C\cdot f_l$. Each passenger is taken as an item in GBAP with a $0-1$ weight vector $r_{lp}:=(r_{lp}^{(1)},\ldots,r_{lp}^{(n_l)})\in\{0,1\}^{n_l}$. If the $i$th edge of line $l$ is used when passenger $p$ is matched to line $l$, then we set the $i$th entry of $r_{lp}$ to $1$, and $0$ otherwise. Note that $r_{lp}$ has consecutive $1$s according to the definition of a route in RLPP. The cost of utilizing bin $l$ is set to be $c_l$ while the cost of assigning item $p$ to bin $l$ is assumed to be $0$. Contributed welfare values $\{v_{lp}\}$ are the coefficients in the objective of (\ref{eq:2D-IP-BAP-UW}). Since each passenger can only be matched to at most one line, we let $\rho_p=1$ for each $p\in[P]$. {The RLPP is an instance of GBAP with an additional \emph{aggregation} step that merges any subset of selected lines with the same route (and different frequencies) to a single line with the corresponding aggregated frequency.} Since the aggregation step neither increases the cost of the solution nor decreases the objective values as shown in \cite{perivier2021real}, and can be trivially incorporated into our framework, we omit it here for simplicity.

P\'erivier et al. \cite{perivier2021real} present an approximation algorithm (see Algorithm \ref{alg:rlpp} in Appendix \ref{appendix:comparison}) to solve the problem based on the randomized rounding scheme proposed by \cite{fleischer2011tight}. The corresponding approximation ratio is $1-\frac{1}{e}-\epsilon$, with a probability of violating the budget constraint bounded by $e^{-\epsilon^2k/3}$, where $k:=\frac{B}{\max_{l\in \mathcal{L}}c_l}$ as we have assumed before.

\begin{remark}\label{remark:tradeoff}
    We note that the violation probability bound, given by $e^{-\frac{k}{3}\epsilon^2}$, may surpass $\frac{1}{2}$, even when $k$ is as large as $\frac{1}{\epsilon^2}$. To address this, one may consider setting $\epsilon$ to $\frac{1}{k^\gamma}$, with $\gamma < \frac{1}{2}$. However, this implies that the method's approximation ratio becomes $\left(1-\frac{1}{e}\right)\left(1-\frac{1}{k^\gamma}\right)<\left(1-\frac{1}{e}\right)\left(1-\frac{1}{\sqrt{k}}\right)$ with violation probability bound $e^{-\frac{1}{3}k^{1-2\gamma}}$. We present Table \ref{table:comparison} to illustrate this by taking $k=25,50,100$, which are reasonable and relatively large parameters in real-world cases. Note that by definition, the number of the opened lines in the optimal solution can be significantly larger than $k$. We calculate the approximation ratio as $\left(1-\frac{1}{e}\right)\left(1-\epsilon\right)$ since it is the original one derived by \cite{perivier2021real} and is higher than $\left(1-\frac{1}{e}-\epsilon\right)$.  Furthermore, note that the rounding process tends to produce infeasible solutions with higher objective values than feasible ones, suggesting that the actual expected approximation ratio for feasible solutions might be significantly lower than $\left(1-\frac{1}{e}\right)\left(1-\epsilon\right)$, especially when the violation probability is non-negligible. 
   

    \begin{table}[ht]
\centering
\begin{tabular}{|c|c|c|c|c|}
\hline
$k$ & $\gamma$ & $\epsilon$ & $\left(1-\frac{1}{e}\right)\left(1-\epsilon\right)$ & $e^{-\frac{k}{3}\epsilon^2}$ \\ \hline
25  & 1/3       & 0.3420     & 0.4159                & 0.3773                         \\ \hline
25  & 1/4       & 0.4472     & 0.3494                & 0.1889                         \\ \hline
50  & 1/3       & 0.2714     & 0.4605                & 0.2929                         \\ \hline
50  & 1/4       & 0.3761     & 0.3944                & 0.0947                         \\ \hline
100 & 1/3       & 0.2154     & 0.4959                & 0.2128                         \\ \hline
100 & 1/4       & 0.3162     & 0.4322                & 0.0357                         \\ \hline
\end{tabular}
\caption{Results for the given expressions with varying $k$ and $\gamma$ values.  Algorithm \ref{alg-no-magician-1d} achieve approximation ratios greater than $0.39$, $0.4$, and $0.44$ for $k=25, 50, 100$, respectively, by setting the $\epsilon$ term to be a sufficiently small constant. }\label{table:comparison}
\label{tab:results}
\end{table}

We note that when  Algorithm \ref{alg-no-magician-1d}, and Algorithm \ref{alg:rlpp} (in Appendix \ref{appendix:comparison}) utilize the same LP solution for their rounding process,  Algorithm \ref{alg-no-magician-1d} are provably superior to Algorithm \ref{alg:rlpp}. The reason for this is that the feasible solutions produced by Algorithm \ref{alg:rlpp} can also be generated by  Algorithm \ref{alg-no-magician-1d} with the same probability distributions, while each infeasible solution generated by Algorithm \ref{alg:rlpp} is converted into a feasible one by Algorithm \ref{alg-no-magician-1d}. However, since the approximation ratio derived for Algorithm \ref{alg:rlpp} encompasses the objective values of infeasible solutions, it is not applicable to  Algorithm \ref{alg-no-magician-1d}. 
 \end{remark}
 




\subsection{Numerical Experiments.}

In this section, we conduct numerical experiments in the context of RLPP based on the data provided by \cite{perivier2021real} and Algorithm \ref{alg-no-magician-1d}. We also compare its performance with Algorithm \ref{alg:rlpp} with $\epsilon = 0.05$. The underlying road network is derived from OpenStreetMap (OSM) geographical data from \cite{boeing2017osmnx}. The size of the candidate set of lines is set to be $1000$, and the lines are generated based on the method proposed by \cite{silman1974planning}. The passenger data comes from records of for-hire vehicle trips in Manhattan using the New York City Open Data operator, with a time window between 5 pm and 6 pm on the first Tuesday of April 2018. There are $13,851$ trip requests. The bus capacity is set to be $30$. For practical purposes, we solve the corresponding LP via column generation and apply a timeout—the current LP solution will be returned once the time limit is exceeded.

We implement Algorithm \ref{alg-no-magician-1d}, Algorithm \ref{alg:rlpp}, and the modified Algorithm \ref{alg-no-magician-1d} with rounding based on the same LP solution used by Algorithm \ref{alg:rlpp}. The experiments consider four different budgets, and we simulate the rounding $10^4$ times. In Figure \ref{fig:comparison}, the $x$-axis represents the number of simulations, and the $y$-axis indicates the objective value of the best solution found by the algorithms so far. We omit the first $100$ realizations in Figure \ref{fig:comparison} to make the display of the $y$-axis more readable. The results\footnote{In the dataset we employed, the value of \( k \) substantially exceeds \( 3 \). In this case, Algorithm \ref{alg-no-magician-1d} coincides with Algorithm 2 presented in our conference paper \cite{jiang2022approximation}, yielding same numerical outcomes.
} show that when the budget is $8\times 10^4$ and $10^5$ respectively, our algorithms find significantly better solutions, while the three methods perform similarly when the budget is $2\times 10^4$ and $5\times 10^4$ respectively.

\begin{figure}
\centering
\includegraphics[width=12cm]{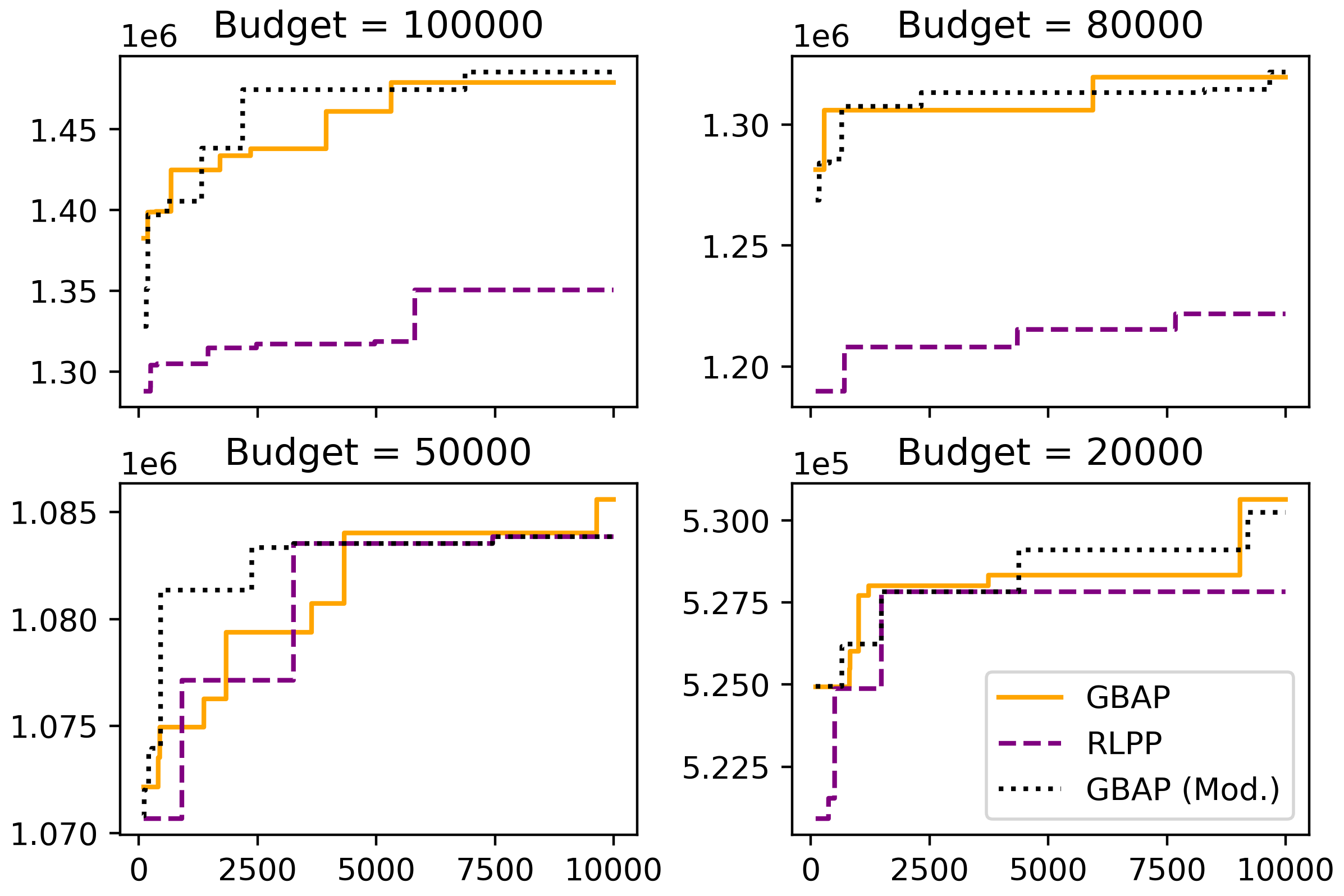}
\caption{We compared three methods: Algorithm \ref{alg-no-magician-1d} (GBAP), Algorithm \ref{alg:rlpp} (RLPP) (with $\epsilon=0.05$), and Modified Algorithm \ref{alg-no-magician-1d} (which uses the same LP solution for rounding as Algorithm \ref{alg:rlpp}). Note that Modified Algorithm \ref{alg-no-magician-1d} always performs better than Algorithm \ref{alg:rlpp} since they use the same realizations produced by the rounding process, which is based on the same LP solution.\label{fig:comparison}
}
\end{figure}

\section{Conclusion and Future Directions}\label{sec:conclusion}

In this paper, we have introduced a novel approach to the Generalized Budgeted Assignment Problem (GBAP) and demonstrated its application to the Real-Time Line Planning Problem (RLPP) in transportation systems. To the best of our knowledge, this is the first algorithm to achieve a constant approximation ratio for GBAP. Our approach not only enhances the theoretical understanding of GBAP but also provides practical solutions for complex transportation planning challenges.

Our contributions are threefold. First, we have developed a randomized approximation algorithm for GBAP with a proven constant approximation ratio. Second, we have introduced an innovative technique that employs randomized rounding realizations to simulate two distinct online mechanisms. This approach enables the simultaneous satisfaction of different constraints while maintaining a strong theoretical guarantee on algorithm performance. Third, we have applied our algorithm to the RLPP and demonstrated its effectiveness through numerical experiments using real-world data from New York City's transportation system. This application underscores the algorithm's potential for large-scale urban transit planning and bridges the gap between theoretical advancements and practical implementations in transportation systems.

While our results represent a significant step forward, several avenues for future research remain open.

As stated in out extended abstract at COCOON \cite[Theorem 6]{jiang2022approximation}, we have established that the approximation ratio cannot exceed $1-\frac{1}{e}$ by reducing the problem to the max $k$-cover problem.\footnote{We have chosen to omit the detailed statement of this result to maintain focus on our primary contributions.} An avenue for future research is to explore whether the upper bound can be tightened to a constant strictly less than $1-\frac{1}{e}$.

The second potential direction for future research is to explore possible improvements to our current approximation ratio. While our algorithm (Algorithm \ref{alg-no-magician-1d}) enhances practical efficiency and performance compared to the two alternative procedures presented in Lemma \ref{lem:kgeq3} and Lemma \ref{lem:kleq3}, its approximation ratio guarantee remains the same as these procedures since it essentially builds upon their bounds. It remains unclear whether there is a significant gap in the theoretical guarantees between our algorithm and these two alternative procedures.
 Therefore, refining the analysis to enhance the theoretical bound of Algorithm \ref{alg-no-magician-1d} may be worthwhile. Another avenue could be the development of new algorithms with a better approximation ratio guarantee. However, it is important to note that our primary goal has been to develop an algorithm that is both theoretically sound and practically efficient. This dual requirement presents significant challenges in substantially improving the approximation ratio without compromising computational efficiency. Any future improvements will need to carefully balance these competing objectives, making this a complex but potentially rewarding area for further investigation.

It is also interesting for future research to explore scenarios with stochastically appearing items, each following a known binary distribution. In this context, the objective would be to maximize expected rewards by strategically utilizing bins within the given budget constraint. This extension could enhance the model's applicability to real-world situations where demand or item availability is uncertain but follows predictable patterns.

\bibliographystyle{plainnat}
\bibliography{bib_assignment}

\begin{appendices}

\section{Proof of Lemma \ref{lem:kleq3}}\label{appendix:proof-of-lem-kleq}

\begin{proof}[Proof of Lemma \ref{lem:kleq3}] 
Point (a) can be proven using the exact same way as in Lemma \ref{lem:kleq3}. Regarding point (b) and (c), they can be directly verified from its definition.

Following the same reasoning as in \eqref{eq:xi=H}, 
 the expected reward derived from the procedure in Lemma \ref{lem:kleq3} can be expressed as:

\begin{align}
    \E\left[\sum_{l\in[L]}\sum_{p\in [P]}v_{lp}Z_l^p\cdot \widetilde{Y}_{lp}\cdot \xi_l\right]
    &=\E\left[\sum_{l\in[L]}\sum_{S\in I_l}\sum_{p\in S}v_{lp}Z_l^pZ_l^S\cdot \xi_l\right]\nonumber\\
    &\geq  \frac{1}{2}\sum_{l\in[L]}\E\left[\sum_{S\in I_l}\sum_{p\in S}v_{lp}Z_l^S\Bigg\vert H_l = 1\right]\cdot P\left[\xi_l=1\right]\label{eq:xi=H-2}
\end{align}

Same as in Lemma \ref{lem:kleq3}, let us define 
\[ r_l := \E\left[\sum_{S \in I_l}\sum_{p \in S} v^S_{lp} Z_l^S \mid H_l = 1\right] = \frac{\sum_{S \in I_l}\left(\widetilde{X}_{lS} \sum_{p \in S} v_{lp}\right)}{\sum_{S \in I_l} \widetilde{X}_{lS}}. \] 
Subsequently, Eq.~\eqref{eq:xi=H-2} can be expressed as

\begin{align}\label{eq:reduce-online-knapsack-2}
    \frac{1}{2}\sum_{l \in [L]} \E\left[r_l \cdot \xi_l\right] = \frac{1}{2} \E\left[\sum_{l \in [L]} r_l \cdot \xi_l\right].
\end{align}

To establish the lower bound of Eq.~(\ref{eq:xi=H-2}), we again construct an instance of the online fractional knapsack problem, as in Definition~\ref{def:online-knapsack}, consisting of \( L \) rounds. In each round \( l \), an object appears with a probability \( P(H_l = 1) \), possessing a weight \( c_l \) and a reward \( r_l \). The initial capacity is equal to the budget $k$. As specified in Step 2 of Algorithm~\ref{alg-no-magician-1d}, it satisfies that \( \frac{r_1}{c_1} \geq \frac{r_2}{c_2} \geq \ldots \geq \frac{r_L}{c_L} \), allowing us to apply Lemma~\ref{expected-reward-knapsack}.

Following the same reasoning as in Lemma \ref{lem:kleq3}, expected rewards of this online knapsack problem instance are at least:

\begin{align*}
    \frac{1}{2}\sum_{l\in [L]}r_l\cdot P(H_l=1)=\frac{1}{2}\sum_{l\in[L]}\sum_{p\in [P]}\sum_{S\in I_l:p\in S}v_{lp}\widehat{X}_{lS}.
\end{align*}

Unlike Lemma \ref{lem:kgeq3}, the definition of \(\xi_l\) provided in Lemma \ref{lem:kleq3} indicates that the reward cannot exceed \(\sum_{l\in [L]}r_l\cdot \xi_l\). Consequently, we establish:

\begin{align}
    \E\left[\sum_{l\in [L]}r_l\cdot \xi_l\right] \geq \frac{1}{2}\sum_{l\in[L]}\sum_{p\in [P]}\sum_{S\in I_l:p\in S}v_{lp}\widehat{X}_{lS}.\label{eq:keep-last-bin}
\end{align}

Summarizing \eqref{eq:xi=H-2},\eqref{eq:reduce-online-knapsack-2} and \eqref{eq:keep-last-bin}, we conclude that

$$\E\left[\sum_{l\in[L]}\sum_{S\in I_l}\sum_{p\in S}v_{lp}Z_l^pZ_l^S\cdot \xi_l\right] \geq \frac{1}{2}\E\left[\sum_{l\in [L]}r_l\cdot \xi_l\right] \geq \frac{1}{4}\sum_{l\in[L]}\sum_{p\in [P]}\sum_{S\in I_l:p\in S}v_{lp}\widehat{X}_{lS}.$$

\noindent This completes the proof.
\end{proof}

\section{ Method from \cite{perivier2021real}}\label{appendix:comparison}

Here we restate the method from \cite{perivier2021real} while omitting the aggregation step.

\begin{alg}\label{alg:rlpp}
~

\begin{enumerate}
        \item Let $\widehat{X}$ be the solution to the LP relaxation of (\ref{eq:2d-IP-config-original}).
    \item\label{alg-rlpp-rounding} Independently for each $l\in[L]$, 
    randomly select a set $T_l$ from $I_l\cup\{\emptyset\}$ following the distribution below:
   \begin{equation*}
        P({T}_l=S)=
        \begin{cases}
      1-\sum_{S\in I_l}\widehat{X}_{lS} & \text{if }S=\emptyset\\
      \widehat{X}_{lS} & \text{if }S\in I_l
    \end{cases} .
    \end{equation*}

Let the (random) decision  variable be $Z_l^S$, i.e., $Z_l^S=1$ if $S=T_l$, and $Z_l^S=0$ otherwise for $S\in I_l\cup\{\emptyset\}$.
 \item Temporarily assign all the elements in $T_l$ to bin $l$ for each $l\in [L]$.
    \item For $p\in [P]$, we remove each item $p$ that is assigned to more than $\rho$ bins from all but the $\rho$ bins with highest rewards $v_{lp}$ it is assigned to.
    \item Open all the lines that have been assigned with at least one passenger, and close all the other lines. If this assignment does not exceed the budget, then we take this assignment as the final solution.
\end{enumerate}
\end{alg}

\end{appendices}

\end{document}